\documentclass[11pt]{amsart}

\usepackage[a4paper,margin=1in]{geometry}
\usepackage[T1]{fontenc}
\usepackage[utf8]{inputenc}
\usepackage{lmodern}
\usepackage{microtype}
\usepackage{amsmath,amssymb,amsthm,mathtools,mathrsfs,bm}
\usepackage{enumitem}
\usepackage{booktabs,array,longtable}
\usepackage{xcolor}
\usepackage[colorlinks=true,linkcolor=blue!50!black,citecolor=blue!50!black,urlcolor=blue!50!black]{hyperref}
\usepackage[nameinlink,capitalize]{cleveref}

\newtheorem{theorem}{Theorem}[section]
\newtheorem{proposition}[theorem]{Proposition}
\newtheorem{lemma}[theorem]{Lemma}
\newtheorem{corollary}[theorem]{Corollary}
\newtheorem{definition}[theorem]{Definition}

\newtheorem{remark}[theorem]{Remark}

\newcommand{\R}{\mathbb R}

\newcommand{\Tail}{\operatorname{Tail}}

\title[Boundary hyperbolic packets]{Boundary Hyperbolic Packets in Axisymmetric Euler Flow}

\author{Rishad Shahmurov}
\address{Cellular Products Research and Development, Roswell, Georgia 30075, USA}
\email{}

\subjclass[2020]{35Q31, 35B44, 35B40, 76B03, 76M22}
\keywords{Euler equations, axisymmetric flow with swirl, boundary amplification, Biot--Savart law, Green kernel, hyperbolic flow, conditional blow-up criterion}

\begin{document}

\begin{abstract}
We study hyperbolic side-wall packets for the axisymmetric Euler equations with swirl in a periodic cylinder.  In the exact odd class, $\Gamma=ru^\theta$ and $G=\omega^\theta/r$ satisfy $D_t\Gamma=0$ and $D_tG=r^{-4}\partial_z(\Gamma^2)$, and the symmetry fixes the boundary point at which the relevant swirl and vorticity gradients are measured.  We derive the five-dimensional side-wall Green-kernel expansion, its leading signed hyperbolic kernel, packet--core coupling, off-diagonal shear estimates, anisotropy and source-curvature identities, and a maximal conic-score formulation.  Under a common admissible packet bootstrap these estimates imply the Dini system $D^+\mathfrak M\ge cB^2$, $B'\ge c\mathfrak M B$, and hence finite-time blow-up of the comparison amplitudes.  The result is a conditional amplification criterion: it does not prove that one smooth Euler trajectory preserves all packet-dominance, tail, shear, anisotropy, and validity hypotheses until the comparison blow-up time.  Establishing or refuting that persistence is the remaining problem for this mechanism.
\end{abstract}

\maketitle

\tableofcontents

\section{Introduction}

The three-dimensional incompressible Euler equations
\begin{equation}\label{eq:euler}
\partial_tu+(u\cdot\nabla)u+\nabla p=0,
\qquad
\nabla\cdot u=0,
\end{equation}
are the inviscid model for an ideal incompressible fluid.  Whether smooth three-dimensional Euler solutions can develop a finite-time singularity from smooth finite-energy data is one of the central open problems in mathematical fluid dynamics.  The Beale--Kato--Majda criterion \cite{BKM1984} identifies the time-integrability of the maximum vorticity as a decisive breakdown criterion.  This makes the geometry of vorticity amplification, vortex stretching, and nonlocal velocity recovery central to any proposed blow-up mechanism.

The axisymmetric-with-swirl class is one of the most important reduced three-dimensional geometries in which the Euler equations still retain a genuinely three-dimensional vortex-stretching mechanism.  In cylindrical coordinates \((r,\theta,z)\), an axisymmetric velocity with swirl has the form
\begin{equation}\label{eq:axisym_velocity}
u(r,z,t)=u^r(r,z,t)e_r+u^\theta(r,z,t)e_\theta+u^z(r,z,t)e_z.
\end{equation}
The angular momentum
\begin{equation}\label{eq:Gamma_def}
\Gamma=r u^\theta
\end{equation}
is transported by the meridional flow, while the vorticity-ratio variable
\begin{equation}\label{eq:G_def}
G=\frac{\omega^\theta}{r}
\end{equation}
is forced by the axial derivative of \(\Gamma^2\).  Precisely,
\begin{equation}\label{eq:Gamma_G_equations_intro}
D_t\Gamma=0,
\qquad
D_tG=r^{-4}\partial_z(\Gamma^2),
\end{equation}
where
\[
D_t=\partial_t+u^r\partial_r+u^z\partial_z.
\]
Thus the swirl is transported, but its gradient can force growth of \(G\).  This is the fundamental source mechanism studied in this manuscript.

The boundary hyperbolic scenario for axisymmetric Euler with swirl was brought into sharp focus by the numerical work of Luo and Hou \cite{LuoHouPNAS2014,LuoHouMMS2014}, who studied the three-dimensional axisymmetric Euler equations in a periodic cylinder with solid boundary and reported strong numerical evidence for a potential finite-time singularity at the boundary.  Related one-dimensional and reduced models inspired by this mechanism have been studied rigorously, including finite-time blow-up for Hou--Luo-type models \cite{ChoiHouKiselevLuoSverakYao2017,ChenHouHuang2022}.  The broader landscape also includes Elgindi's finite-time singularity formation for \(C^{1,\alpha}\) Euler solutions \cite{Elgindi2021}, boundary blow-up constructions in related settings \cite{ChenHou2021}, and small-scale creation mechanisms for incompressible Euler in bounded domains \cite{KiselevSverak2014}.

Chen and Hou established a computer-assisted boundary singularity result for smooth three-dimensional Euler flow by a different nearly self-similar framework \cite{ChenHou2025}.  The goal here is different: we isolate a direct kernel--packet mechanism in the original axisymmetric variables and formulate the precise persistence assumptions needed for its amplification argument.  The central object is an active boundary packet near the side wall of the periodic cylinder
\begin{equation}\label{eq:cylinder}
\Omega=\{(r,\theta,z):0<r<1,\ z\in\mathbb T\},
\end{equation}
with impermeable boundary condition
\begin{equation}\label{eq:impermeable}
u^r(1,z,t)=0.
\end{equation}
Near the side wall we set
\begin{equation}\label{eq:flat_coords}
x=1-r,
\qquad
y=z.
\end{equation}
The candidate singular point is \((x,y)=(0,0)\), corresponding to \((r,z)=(1,0)\).  The active packet is
\begin{equation}\label{eq:packet_intro}
\mathcal P_\lambda=\{0<x<\lambda,
\ |y|<\lambda\}.
\end{equation}

The key elliptic fact is that the axisymmetric recovery can be written in a five-dimensional lifted form.  If
\begin{equation}\label{eq:phi_recovery_intro}
u^r=-r\partial_z\phi,
\qquad
u^z=2\phi+r\partial_r\phi,
\end{equation}
then
\begin{equation}\label{eq:5d_recovery_intro}
-\Delta_5\phi=G,
\qquad
\Delta_5=\partial_r^2+\frac3r\partial_r+\partial_z^2.
\end{equation}
The side-wall boundary condition implies that \(\phi\) is constant on \(r=1\), and after normalization we take \(\phi|_{r=1}=0\).  The boundary compression rate is
\begin{equation}\label{eq:sigma_intro}
\sigma(t)=-\partial_z u^z(1,0,t)=-\partial_z\partial_r\phi(1,0,t).
\end{equation}
The side-wall Green-kernel expansion gives the leading effective two-variable kernel
\begin{equation}\label{eq:K0_intro}
K_0(x,y)=C_0\frac{xy}{(x^2+y^2)^2},
\qquad C_0>0.
\end{equation}
This kernel is positive for \(x>0,y>0\) and odd in \(y\).  Hence, for an odd sign-coherent packet
\[
G(x,-y)=-G(x,y),
\qquad
G(x,y)\ge0\quad(y>0),
\]
the local packet contribution creates positive hyperbolic compression.

The rest of the paper turns this observation into a theorem package.  The package contains:
\begin{enumerate}[label=(\roman*)]
\item a side-wall Green-kernel sign/parity expansion;
\item a boundary Biot--Savart sign lemma;
\item tail dominance or packet reselection;
\item packet--core coupling between the weighted hyperbolic mass and the odd core slope \(\partial_zG(1,0,t)\);
\item off-diagonal shear suppression in the inner core;
\item radial-swirl anisotropy persistence;
\item differentiated \(G\)-source amplification;
\item an odd-symmetry ODE comparison for normalized amplitudes.
\end{enumerate}
The resulting conditional engine is
\begin{equation}\label{eq:ODE_engine_intro}
A'(t)\gtrsim B(t)^2,
\qquad
B'(t)\gtrsim A(t)B(t),
\end{equation}
where
\begin{equation}\label{eq:A_B_intro}
A(t)=\lambda(t)\partial_zG(1,0,t),
\qquad
B(t)=\lambda(t)^{1/2}\partial_z\Gamma(1,0,t).
\end{equation}
This ODE comparison blows up in finite time.  Therefore the remaining task is not conceptual but invariant: one must prove, or disprove, that the packet hypotheses persist for smooth axisymmetric Euler data.

\subsection{Main conditional criterion}
The theorem proved below is conditional on the persistence of one common admissible packet regime.  The hypotheses are stated entirely in terms of the Euler solution and the side-wall recovery estimates.

\medskip
\noindent\textbf{Boundary-packet amplification criterion.}
\begin{theorem}\label{thm:intro_main_exact_odd}
Let $u$ be a smooth exact-odd axisymmetric Euler solution with swirl in the periodic cylinder.  Assume on a time interval that the side-wall Green-kernel expansion has the stated uniform remainder bounds, a narrow-diagonal packet remains dominant, the packet--core coupling and off-diagonal shear estimates remain valid, the anisotropy/source-curvature bounds hold, and the packet stays inside the region of validity of these estimates.  Then the maximal conic score $\mathfrak M_{\delta_c}$ and the normalized swirl-gradient amplitude $B$ satisfy
\[
D^+\mathfrak M_{\delta_c}\ge c_1B^2,
\qquad
B'\ge c_2\mathfrak M_{\delta_c}B.
\]
If both quantities are initially positive, the comparison system blows up in finite time.  Consequently the smooth Euler solution cannot remain in this admissible packet regime up to that comparison time unless a classical continuation norm becomes unbounded.
\end{theorem}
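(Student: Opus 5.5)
The plan is to assemble the theorem from the package items (i)--(viii) in the order the introduction lists them, with the two differential inequalities derived separately and then coupled.

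\textbf{Step 1: the inequality for $\mathfrak M_{\delta_c}$.} Differentiate the evolution equation $D_tG=r^{-4}\partial_z(\Gamma^2)$ in $z$ and evaluate at the boundary point $(r,z)=(1,0)$. At that point exact oddness gives $u^z=0$, and impermeability gives $u^r=0$, so transport terms do not move the point. This yields
\[
\tfrac{d}{dt}\partial_zG(1,0,t)=2(\partial_z\Gamma)^2+2\Gamma\partial_z^2\Gamma-(\partial_zu^z)\partial_zG,
\]
with all quantities evaluated at $r=1$. Oddness forces $\Gamma$ to be odd in $z$, so $\Gamma(1,0)=0$ and the curvature term drops at the point itself. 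The remaining local-to-packet error is controlled by the source-curvature identity. The stretching term $\sigma\,\partial_zG$ has a favorable sign, because the boundary Biot--Savart sign lemma gives $\sigma\ge0$. Multiplying by $\lambda$ and using the anisotropy bounds together with the admissible rate of change of $\lambda(t)$, I would convert this pointwise inequality into a lower Dini-derivative bound for each conic score. Passing to the maximum requires the standard Danskin-type lemma: $D^+$ of a maximum of a family of scores is at least $D^+$ of the score attaining the maximum. This step uses packet reselection for the tail.

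\textbf{Step 2: the inequality for $B$.} Differentiate $D_t\Gamma=0$ in $z$ at $(1,0)$. Since $u^r$ and $u^z$ vanish there, $\partial_t\partial_z\Gamma=-(\partial_zu^z)\partial_z\Gamma=\sigma\,\partial_z\Gamma$. Next, the leading kernel $K_0$ together with the Green-kernel remainder bounds gives $\sigma\ge c\,\lambda^{-1}\cdot(\text{weighted hyperbolic mass})$. Packet--core coupling compares that mass with $\lambda\partial_zG(1,0)$, and hence with $\mathfrak M_{\delta_c}$, up to errors. The off-diagonal shear estimates absorb those errors. Including the $\lambda^{1/2}$ normalization then gives $B'\ge c_2\mathfrak M_{\delta_c}B$.

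\textbf{Step 3: blow-up of the comparison system.} With $\mathfrak M,B>0$ initially, $B$ is nondecreasing. Take $E=\mathfrak M^2/c_1 + B^2/c_2$ or, more simply, observe that $\mathfrak M$ grows at least linearly, so that eventually $\mathfrak M\ge\mathfrak M_0+c_1B_0^2t$. Then $(\mathfrak M B)'\gtrsim B^3+\mathfrak M^2B$, and a Riccati comparison for $Y=\mathfrak M+\kappa B$ gives $Y'\gtrsim Y^2$, hence finite-time blow-up. The final clause is a contrapositive: if the solution remained smooth, with classical continuation norms (for example BKM) bounded, and stayed admissible past that time, then $\partial_zG(1,0)$ and $\partial_z\Gamma(1,0)$ would be bounded, which contradicts the comparison.

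\textbf{Main obstacle.} I expect the hardest part to be Step 1's passage to the maximal conic score. The time-dependence of $\lambda(t)$ and the reselection of packets produce terms involving $\lambda'/\lambda$ that must be dominated by $B^2$. My first attempt would be to use the validity hypothesis to bound $|\lambda'|\lambda^{-1}\lesssim\sigma$, and then absorb this term into the favorable stretching term.
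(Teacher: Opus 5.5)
There is a genuine gap in Step 1, and it is where the content of the statement lies. The identity you derive at $(1,0)$, $a'=\sigma a+2b^2$, is an equation for $A=\lambda a$, not for the conic score. Packet--core coupling only gives $cA\le\mathcal H_\lambda\le CA$, and two-sided comparability of two functions says nothing about their derivatives. So a lower bound on $A'$ gives no lower bound on $D^+\mathcal H_{\lambda,\delta_c}$, and hence none on $D^+\mathfrak M_{\delta_c}$; your Danskin observation is correct but only reduces the problem to this. Once $\lambda$ is normalized correctly, Steps 1--2 actually prove the point system of \Cref{lem:exact_odd_point_identities} ($A'=2B^2$, $B'\ge cAB$), not the Dini inequality for $\mathfrak M_{\delta_c}$.

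The paper instead differentiates the smooth conic mass $\int\chi_{\lambda,\delta_c}K_0G$ itself along a near-maximizing packet with $\lambda'=-\sigma\lambda$. This needs three ingredients your plan lacks.
\begin{enumerate}[label=(\alph*)]
\item An \emph{integrated} source bound, $\int\chi_{\lambda,\delta_c}K_0\,\partial_y(\Gamma^2)\ge c\lambda b^2=cB^2$. It uses source-core nondegeneracy $\Gamma\approx by$ on the whole inner cone (\Cref{lem:diag_source_lower}), not merely $\Gamma(1,0)=0$.
\item Control of the transport term, which is moved by integration by parts onto $\chi K_0$. The leading field $V_\sigma=\sigma x\partial_x-\sigma y\partial_y$ satisfies $V_\sigma K_0=-4\sigma\frac{x^2-y^2}{x^2+y^2}K_0$, which is small only on narrow diagonal cones (\Cref{lem:diag_kernel_transport}); this is why the diagonal class is used.
\item Absorption of the kernel, tail, cutoff, strain-defect and anisotropy errors. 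These have size $\sigma\mathcal H\lesssim\mathfrak M_{\delta_c}^2$, not $B^2$. The paper absorbs them only through the extra bootstrap hypothesis of source dominance, $B^2\ge C_*\mathfrak M_{\delta_c}^2$ in \eqref{eq:source_dominance_initial}, used in \Cref{thm:narrow_admissibility_absorption}.
\end{enumerate}
Your plan has no relation between $B$ and $\mathfrak M_{\delta_c}$, so it cannot dominate these errors by $c_1B^2$. The source-curvature identity does not substitute: in the exact odd class the curvature term already vanishes at the point, and a point identity cannot control packet integrals. Nor is packet reselection available; the paper treats tail control as a hypothesis.

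Several secondary steps also need repair.
\begin{itemize}
\item The normalization must satisfy $\lambda'/\lambda\ge-\sigma$ (the paper takes equality), not merely $|\lambda'|/\lambda\lesssim\sigma$. Since $A'=(\lambda'/\lambda+\sigma)A+2B^2$ and $B'/B=\tfrac12\lambda'/\lambda+\sigma$, a shrinking rate $C\sigma$ with $C>1$ leaves a negative term $(1-C)\sigma A$ that nothing in your plan dominates. If $C>2$, $B$ decreases. With $\lambda'=-\sigma\lambda$ the stretching term is not just favorable; it cancels exactly.
\item In Step 2, $\sigma\ge c\lambda^{-1}\mathcal H$ has the wrong homogeneity: $K_0$ is homogeneous of degree $-2$, so $\sigma$ and $\mathcal H_\lambda$ scale alike. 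The sign lemma gives $\sigma\ge c_0\mathcal H_\lambda$ minus tail and remainder terms. Applying it to the maximizing packet yields $\sigma\ge c\mathfrak M_{\delta_c}$, and the shear estimates play no role here.
\item In Step 3, $D^+(\mathfrak M_{\delta_c}+\kappa B)\ge c_1B^2+\kappa c_2\mathfrak M_{\delta_c}B$ has no $\mathfrak M_{\delta_c}^2$ term, so $Y'\gtrsim Y^2$ fails when $\mathfrak M_{\delta_c}\gg B$. Use instead your own product bound, $D^+(\mathfrak M_{\delta_c}B)\ge c_1B^3+c_2\mathfrak M_{\delta_c}^2B\ge c(\mathfrak M_{\delta_c}B)^{3/2}$ by weighted AM--GM. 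Alternatively, as the paper does, eliminate time in the equality system and apply comparison for this quasimonotone system.
\end{itemize}
The continuation clause is fine and matches \Cref{prop:continuation_norm_blowup}.
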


\section{Physical and geometric interpretation}

The mechanism studied here is a boundary version of hyperbolic small-scale creation.  The physical loop is
\begin{equation}\label{eq:physical_loop}
\boxed{\begin{gathered}
\text{odd }G\text{ packet}\Rightarrow\text{boundary compression}
\Rightarrow\partial_z\Gamma\text{ steepening}\\
\Rightarrow\partial_z(\Gamma^2)\text{ source}
\Rightarrow\text{larger }G\text{ packet}.
\end{gathered}}
\end{equation}
The side wall is essential.  The impermeability condition \(u^r=0\) allows a hyperbolic stagnation structure on the wall:
\begin{equation}\label{eq:hyperbolic_velocity}
u^z(1,y,t)\approx -\sigma(t)y,
\qquad
\sigma(t)>0.
\end{equation}
This compresses material points toward \(y=0\) along the boundary.  Since \(\Gamma\) is transported, axial compression amplifies \(\partial_z\Gamma\).  The amplified swirl gradient then feeds \(G\) through
\[
D_tG=r^{-4}\partial_z(\Gamma^2).
\]
The nonlocal Biot--Savart law converts the odd \(G\)-packet back into the compression \(\sigma\).  Thus the packet attempts to close a positive feedback loop.

The geometry differs sharply from viscous no-slip Navier--Stokes.  In a no-slip viscous cylinder, the tangential velocity must vanish at the wall, producing Hardy-type boundary coercivity.  In inviscid Euler, the natural boundary condition is impermeability, not no-slip.  The tangential swirl is allowed at the wall.  Consequently, the boundary does not impose the same two-wall vanishing structure.  This is why the side-wall Euler packet is a more dangerous inviscid candidate than the no-slip viscous edge-ring packet.

The five-dimensional lift is also geometric.  The operator
\[
\Delta_5=\partial_r^2+\frac3r\partial_r+\partial_z^2
\]
is the radial Laplacian in four transverse dimensions plus the axial direction.  Near the side wall \(r=1\), the wall is locally flat in this lifted five-dimensional picture.  The reflected 5D Dirichlet Green function produces, after integrating the three transverse tangential variables, the effective two-variable kernel
\[
K_0(x,y)=C_0\frac{xy}{(x^2+y^2)^2}.
\]
This is the mathematical expression of hyperbolic boundary compression.

\section{Basic equations and notation}

\subsection{Axisymmetric Euler with swirl}

Let \(u\) be axisymmetric with swirl as in \eqref{eq:axisym_velocity}.  The vorticity is
\[
\omega=\nabla\times u=\omega^r e_r+\omega^\theta e_\theta+\omega^z e_z,
\]
where
\begin{equation}\label{eq:vorticity_components}
\omega^r=-\partial_z u^\theta,
\qquad
\omega^\theta=\partial_z u^r-\partial_r u^z,
\qquad
\omega^z=\frac1r\partial_r(r u^\theta).
\end{equation}
The incompressibility condition is
\begin{equation}\label{eq:axisym_div}
\frac1r\partial_r(ru^r)+\partial_z u^z=0.
\end{equation}
The meridional material derivative is
\begin{equation}\label{eq:Dt_def}
D_t=\partial_t+u^r\partial_r+u^z\partial_z.
\end{equation}
The angular momentum and vorticity-ratio variables are
\begin{equation}\label{eq:variables}
\Gamma=ru^\theta,
\qquad
G=\frac{\omega^\theta}{r}.
\end{equation}
They satisfy
\begin{equation}\label{eq:Gamma_eq}
D_t\Gamma=0,
\end{equation}
and
\begin{equation}\label{eq:G_eq}
D_tG=r^{-4}\partial_z(\Gamma^2).
\end{equation}

\subsection{Five-dimensional stream-function recovery}

Define \(\phi\) by
\begin{equation}\label{eq:stream_phi}
u^r=-r\partial_z\phi,
\qquad
u^z=2\phi+r\partial_r\phi.
\end{equation}
Then \eqref{eq:axisym_div} is automatic, and \(\phi\) solves
\begin{equation}\label{eq:phi_eq}
-\Delta_5\phi=G,
\qquad
\Delta_5=\partial_r^2+\frac3r\partial_r+\partial_z^2.
\end{equation}
The natural lifted measure is
\begin{equation}\label{eq:dmu5}
d\mu_5=r^3drdz.
\end{equation}
At the side wall \(r=1\), impermeability gives
\[
0=u^r(1,z)=-\partial_z\phi(1,z).
\]
Thus \(\phi(1,z)\) is constant in \(z\).  We normalize this constant to zero:
\begin{equation}\label{eq:phi_dirichlet_wall}
\phi(1,z)=0.
\end{equation}

\subsection{Flattened side-wall variables}

Near \((r,z)=(1,0)\), set
\begin{equation}\label{eq:flat_vars}
x=1-r,
\qquad
 y=z.
\end{equation}
Then the side wall is \(x=0\), the fluid lies in \(x>0\), and a side-wall packet of scale \(\lambda\) is
\begin{equation}\label{eq:P_lambda}
\mathcal P_\lambda=\{0<x<\lambda,
\ |y|<\lambda\}.
\end{equation}
The positive half-packet is
\begin{equation}\label{eq:P_lambda_plus}
\mathcal P_\lambda^+=\{0<x<\lambda,
\ 0<y<\lambda\}.
\end{equation}
The leading hyperbolic kernel is
\begin{equation}\label{eq:K0_def}
K_0(x,y)=\frac{xy}{(x^2+y^2)^2}.
\end{equation}
The local hyperbolic mass is
\begin{equation}\label{eq:Hlambda_def}
\mathcal H_\lambda[G]
=
\int_{\mathcal P_\lambda^+}
K_0(x,y)G(1-x,y)\,dx\,dy.
\end{equation}

\section{Side-wall Green kernel and hyperbolic parity}

\medskip
\noindent\textbf{Side-wall Green-kernel sign and parity expansion.}
\begin{theorem}\label{thm:green_kernel_expansion}
Let \(\phi\) solve
\[
-\Delta_5\phi=G,
\qquad
\phi|_{r=1}=0,
\]
in the periodic cylinder.  Let
\[
\sigma=-\partial_z u^z(1,0)=-\partial_z\partial_r\phi(1,0).
\]
Then, in flattened coordinates \(x=1-r\), \(y=z\), the recovery kernel for \(\sigma\) has the local expansion
\begin{equation}\label{eq:K_expansion}
K_\Omega(1-x,y)=C_0\frac{xy}{(x^2+y^2)^2}+K_{\rm rem}^{\lambda}(x,y),
\qquad C_0>0,
\end{equation}
inside every sufficiently small side-wall packet \(\mathcal P_\lambda\).  On fixed cones \(m x\le |y|\le Mx\), the remainder satisfies
\begin{equation}\label{eq:K_rem_bound}
|K_{\rm rem}^{\lambda}(x,y)|
\le
C\lambda\frac{|xy|}{(x^2+y^2)^2}
+
K_{\rm smooth}^{\lambda}(x,y),
\end{equation}
where the smooth term is generated by periodic images, the axis, and global boundary corrections.
\end{theorem}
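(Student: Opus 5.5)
The plan is to realize $\phi$ as a genuinely five-dimensional Dirichlet problem and differentiate its Green function.

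\textbf{Lift.} Write $\phi(r,z)=\Phi(X,z)$ with $X\in\mathbb R^4$ and $|X|=r$. Then $\Delta_5$ becomes the flat Laplacian on $\{|X|<1\}\times\mathbb T$, and the same lift applied to $G$ gives $-\Delta\Phi=\tilde G$ with $\Phi=0$ on $|X|=1$. Let $\mathcal G$ denote the Dirichlet Green function of this domain. Then
\[
\sigma=-\partial_z\partial_r\Phi(e_1,0)=\int \partial_{z}\partial_{\nu}\mathcal G(e_1,0;X',z')\,\tilde G\,dX'dz',
\]
where $\partial_\nu$ is the derivative in the outward radial direction at the boundary point. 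Integrating over the sphere $|X'|=r'$ gives the two-variable kernel
\[
K_\Omega(r',z')=r'^3\int_{S^3}(\cdots)\,d\theta,
\]
and this is the quantity to be expanded.

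\textbf{Leading term.} Near the boundary point, replace the curved wall by its tangent hyperplane and the periodic domain by the half-space $\{X_1<1\}$. The half-space Dirichlet Green function is $c_5(|P-Q|^{-3}-|P-Q^*|^{-3})$, where $Q^*$ is the reflection of $Q$. Its normal derivative at the wall is the Poisson kernel $\propto x'/|P-Q|^5$. Taking a further tangential $z$-derivative gives a kernel $\propto x'y'/(x'^2+y'^2+|w|^2)^{7/2}$, where $w\in\mathbb R^3$ collects the three transverse tangential variables. Integrating out $w$ uses
\[
\int_{\mathbb R^3}(a^2+|w|^2)^{-7/2}\,dw=c\,a^{-4},
\]
and this yields exactly $C_0\,xy/(x^2+y^2)^2$. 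The sign follows from the sign of the Poisson kernel combined with the convention $\sigma=-\partial_z u^z$, and it gives $C_0>0$. The parity (oddness in $y$) is immediate.

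\textbf{Remainder.} The difference between the true kernel and this model has three sources.
\begin{enumerate}
\item \emph{Curvature of the sphere $|X|=1$ on the scale $\lambda$.} Straightening the boundary with a $C^2$ diffeomorphism, or comparing against the Kelvin-type reflection $Q^*=Q/|Q|^2$, perturbs the kernel by a relative factor $O(\mathrm{dist})=O(\lambda)$ inside $\mathcal P_\lambda$. The angular integration over $S^3$ differs from flat $\mathbb R^3$ integration by the same relative error, because of the Jacobian factors $r'^3$ and the spherical distance.
\item \emph{Periodic images in $z$.}
\item \emph{The axis and the global boundary.}
\end{enumerate}
Sources (2) and (3) are away from the singular point, so they contribute kernels that are smooth near $(x,y)=(0,0)$; these form $K_{\rm smooth}^\lambda$.

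\textbf{Main obstacle.} I expect the hard step to be proving that the curvature correction is bounded by $C\lambda\,|xy|/(x^2+y^2)^2$, i.e.\ that it retains the factor $xy$, rather than being merely $O(\lambda)(x^2+y^2)^{-3/2}$. Two facts help. Vanishing in $x$ follows from the Dirichlet condition, which forces $\partial_\nu\mathcal G$ to vanish linearly at the wall. Vanishing in $y$ follows from the $z$-reflection symmetry of the domain, which makes the kernel odd in $y$.

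With those two vanishing properties, the correction is $O(\lambda)$ times a kernel of homogeneity $-2$, odd in $y$ and vanishing linearly in $x$. On the cones $mx\le|y|\le Mx$ the quantities $|xy|$ and $x^2+y^2$ are comparable, so such a kernel is bounded by $C|xy|/(x^2+y^2)^2$ with $C$ depending on $m,M$. That comparability is why the estimate is restricted to those cones.

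To control the second derivative of the curvature correction to the Green function, I would use a Schauder-type boundary estimate on the rescaled domain $\lambda^{-1}\Omega$, where the wall's curvature is $O(\lambda)$.
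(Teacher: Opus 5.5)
Your proposal is correct and follows essentially the same route as the paper: lift to the flat five-dimensional Dirichlet problem, use the reflected half-space Green function, take the mixed boundary derivative $\partial_x\partial_y$ at the wall point, and integrate out the three transverse variables to get $C_0xy/(x^2+y^2)^2$; curvature is then treated as an $O(\lambda)$ perturbation, while periodic images and the axis give smooth kernels. Your ``main obstacle'' is not really one: on the cones $mx\le|y|\le Mx$ one has $|xy|/(x^2+y^2)^2\sim(x^2+y^2)^{-1}$, so vanishing in $x$ and oddness in $y$ play no role, and all that is needed is that the curvature correction is $O(\lambda)$ relative to the leading homogeneity (the paper simply asserts this, and your rescaled Schauder argument would justify it); also, your displayed formula for $\sigma$ is missing a minus sign since $\partial_r=\partial_\nu$ at the wall, and $C_0>0$ then comes from $-\partial_\nu\mathcal G$ being the positive Poisson kernel.
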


\begin{proof}
The local model is the 5D half-space \(\R^5_+=\{x>0\}\).  The full-space fundamental solution of \(-\Delta_{\R^5}\) is \(c_5|X|^{-3}\).  The Dirichlet half-space Green function is obtained by reflection:
\[
G_D(X,\Xi)=c_5\bigl(|X-\Xi|^{-3}-|X-\Xi^*|^{-3}\bigr),
\]
where \(\Xi^*\) is the reflection of \(\Xi\) across \(x=0\).  Differentiating at the boundary point produces
\[
\partial_x\partial_yG_D(0;\xi,\zeta,\eta)
=
C\frac{\xi\zeta}{(\xi^2+\zeta^2+|\eta|^2)^{7/2}}.
\]
Integrating the transverse variable \(\eta\in\R^3\) gives
\[
\int_{\R^3}
\frac{\xi\zeta}{(\xi^2+\zeta^2+|\eta|^2)^{7/2}}\,d\eta
=
C_0\frac{\xi\zeta}{(\xi^2+\zeta^2)^2}.
\]
The difference between the true operator
\[
\partial_x^2+\partial_y^2-\frac3{1-x}\partial_x
\]
and the flat half-space model is perturbative on scale \(\lambda\), producing the error term in \eqref{eq:K_rem_bound}.  Periodic images and the axis are separated from the packet and contribute only smooth kernels.
\end{proof}

\medskip
\noindent\textbf{Boundary Biot--Savart sign.}
\begin{corollary}\label{cor:BS_sign}
Assume
\[
G(1-x,-y)=-G(1-x,y),
\qquad
G(1-x,y)\ge0\quad(y>0),
\]
in \(\mathcal P_\lambda\).  Then
\begin{equation}\label{eq:BS_sign}
\sigma
\ge
c_0\mathcal H_\lambda[G]
-
\operatorname{Tail}_\lambda[G]
-
\operatorname{Smooth}_\lambda[G]
-
C\lambda\mathcal H_\lambda[G].
\end{equation}
In particular, if \(\lambda\) is sufficiently small and the tail and smooth terms are dominated by the local mass, then \(\sigma>0\).
\end{corollary}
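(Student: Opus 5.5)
The plan is to write $\sigma$ as a kernel integral and split it into three pieces: the packet, the complement of the packet (the tail), and the smooth global kernel. Using \cref{thm:green_kernel_expansion}, the recovery of $\sigma=-\partial_z\partial_r\phi(1,0)$ can be written as
\[
\sigma=\int K_\Omega(1-x,y)\,G(1-x,y)\,dx\,dy ,
\]
where the measure carries the Jacobian $r^3=(1-x)^3$. On $\mathcal P_\lambda$ this weight lies in $[(1-\lambda)^3,1]$, and I would absorb it into the $C\lambda\mathcal H_\lambda$ error. I then define $\operatorname{Tail}_\lambda[G]$ as the absolute value of the contribution of $\Omega\setminus\mathcal P_\lambda$ from the singular part of the kernel. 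I define $\operatorname{Smooth}_\lambda[G]$ as $\int |K^\lambda_{\rm smooth}|\,|G|$, taken over the whole domain. With these definitions the two negative terms in \eqref{eq:BS_sign} are exactly what is left over after the packet is treated.

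Inside $\mathcal P_\lambda$ I use the odd symmetry. The kernel $C_0xy/(x^2+y^2)^2$ is odd in $y$, and $G$ is odd in $y$, so their product is even. The integral over $\mathcal P_\lambda$ is therefore twice the integral over $\mathcal P_\lambda^+$, which gives $2C_0\mathcal H_\lambda[G]$. Since $G\ge0$ and $K_0\ge0$ on $\mathcal P^+_\lambda$, this contribution is nonnegative. I would set $c_0=2C_0$, up to the harmless normalisation of the Jacobian. The remainder $K_{\rm rem}^\lambda$ is then bounded pointwise by \eqref{eq:K_rem_bound}. The part $C\lambda|xy|/(x^2+y^2)^2$ is paired with $|G|$ and reflected to $\mathcal P^+_\lambda$ using $|G(1-x,-y)|=G(1-x,y)$. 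This gives at most $2C\lambda\mathcal H_\lambda[G]$. The smooth part of the remainder goes into $\operatorname{Smooth}_\lambda$. Collecting terms yields \eqref{eq:BS_sign}.

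The main obstacle is that \eqref{eq:K_rem_bound} is only stated on cones $mx\le|y|\le Mx$, whereas $\mathcal P^+_\lambda$ also contains the near-wall region $|y|>Mx$ and the near-axis region $|y|<mx$. I would handle this in one of two ways. The first is to appeal to the construction in the proof of \cref{thm:green_kernel_expansion}. There the flat-space perturbation is $-\tfrac{3}{1-x}\partial_x$, a first-order term with coefficient $O(1)$ on scale $\lambda$. A Duhamel/parametrix argument shows that the correction kernel gains a factor $\lambda$ relative to $K_0$ uniformly, and keeps the odd-in-$y$ and vanishing-at-$x=0$ structure. That structure is exactly the $|xy|$ factor. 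The second way, if uniformity fails, is to move the sectors outside the cones into $\operatorname{Tail}_\lambda$. I would then define $\mathcal H_\lambda$ effectively on the cone, which is consistent with the later conic score $\mathfrak M_{\delta_c}$. I would try the first route and fall back to the second. The final statement is immediate: for $\lambda<c_0/(2C)$ we get $\sigma\ge \tfrac{c_0}{2}\mathcal H_\lambda-\operatorname{Tail}_\lambda-\operatorname{Smooth}_\lambda$, which is positive under the dominance hypothesis, provided $\mathcal H_\lambda>0$, i.e.\ $G\not\equiv0$ on $\mathcal P_\lambda^+$.
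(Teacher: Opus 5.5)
Your argument is correct and is the one the paper intends. The paper states \Cref{cor:BS_sign} without proof, as a direct consequence of \Cref{thm:green_kernel_expansion}, and its content is exactly your steps: substitute $K_\Omega=C_0K_0+K^\lambda_{\rm rem}$, fold $\mathcal P_\lambda$ onto $\mathcal P_\lambda^+$ by oddness, and charge the remainder to $C\lambda\mathcal H_\lambda$, $\operatorname{Smooth}_\lambda$ and $\operatorname{Tail}_\lambda$.

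The cone-restriction problem you flag is real for \Cref{thm:green_kernel_expansion} as written. The paper closes it only through the uniform bound \eqref{eq:parametrix_remainder_package} of \Cref{thm:sidewall_parametrix_package}, which carries no cone restriction, so cite that and keep your first route. Note that vanishing at $x=0$ and oddness in $y$ do not by themselves give the $|xy|$ factor; you also need a scale-invariant gradient bound on $K_{\rm rem}$. Drop the fallback: restricting $\mathcal H_\lambda$ to a cone, or moving interior off-cone remainders into the tail, proves a different statement from the one claimed.
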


\section{Tail dominance or packet reselection}

\begin{definition}[Dyadic tail masses]\label{def:tail_masses}
Let
\[
A_j=\mathcal P_{2^{j+1}\lambda}\setminus \mathcal P_{2^j\lambda},
\qquad j\ge1.
\]
Define
\[
\mathcal H_j[G]
=
\left|\int_{A_j}K_\Omega G\,d\mu_5\right|.
\]
The exterior tail is
\[
\operatorname{Tail}_\lambda[G]
\le
\sum_{j\ge1}\mathcal H_j[G].
\]
\end{definition}

\medskip
\noindent\textbf{Tail-dominance hypothesis.}
The amplification theorem below uses the quantitative condition
\[
\operatorname{Tail}_\lambda[G]\le\eta\mathcal H_\lambda[G]
\]
on the common bootstrap interval.  A large dyadic exterior contribution may
indicate a competing packet, but no general reselection theorem is used here;
loss of the displayed bound is treated as failure of the packet hypothesis.

\section{Packet--core coupling}

Odd symmetry implies \(G(1,0,t)=0\).  Therefore the relevant core amplitude is not \(G(1,0,t)\), but the odd slope
\begin{equation}\label{eq:a_def}
a(t)=\partial_zG(1,0,t).
\end{equation}
The normalized hyperbolic amplitude is
\begin{equation}\label{eq:A_def}
A(t)=\lambda(t)a(t).
\end{equation}

\medskip
\noindent\textbf{Packet--core coupling for an odd hyperbolic packet.}
\begin{theorem}\label{thm:packet_core}
Assume \(G\) is odd in \(z\) near \((r,z)=(1,0)\), nonnegative for \(z>0\), and satisfies the cone nondegeneracy condition: for fixed constants \(0<m<M\), \(0<\kappa<1\), and \(0<c_0<C_0\),
\begin{equation}\label{eq:cone_nonhollow}
c_0a(t)y
\le
G(1-x,y,t)
\le
C_0a(t)y
\end{equation}
on the cone
\[
\mathcal K_{\kappa\lambda}=\{0<x<\kappa\lambda,
\ mx<y<Mx\}.
\]
Assume also that the outer annular contribution is controlled by the cone contribution.  Then
\begin{equation}\label{eq:packet_core_coupling}
c\lambda(t)a(t)
\le
\mathcal H_{\lambda(t)}[G(t)]
\le
C\lambda(t)a(t).
\end{equation}
Equivalently,
\[
\mathcal H_{\lambda(t)}[G(t)]\sim A(t).
\]
\end{theorem}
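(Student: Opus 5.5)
The plan is to split the half-packet $\mathcal P_\lambda^+$ into the cone $\mathcal K_{\kappa\lambda}$ and its complement $\mathcal P_\lambda^+\setminus\mathcal K_{\kappa\lambda}$, and to compute the $K_0$-weighted mass exactly on the cone. On the cone, \eqref{eq:cone_nonhollow} sandwiches $G$ between $c_0a y$ and $C_0 a y$, and $K_0\ge0$ there. Hence
\[
c_0 a\int_{\mathcal K_{\kappa\lambda}}\frac{xy^2}{(x^2+y^2)^2}\,dx\,dy
\le\int_{\mathcal K_{\kappa\lambda}}K_0G
\le C_0 a\int_{\mathcal K_{\kappa\lambda}}\frac{xy^2}{(x^2+y^2)^2}\,dx\,dy .
\]
To evaluate the cone integral, substitute $y=sx$ with $s\in(m,M)$:
\[
\int_0^{\kappa\lambda}\!\!\int_m^M\frac{x\cdot s^2x^2}{x^4(1+s^2)^2}\,x\,ds\,dx
=\kappa\lambda\int_m^M\frac{s^2}{(1+s^2)^2}\,ds .
\]
Since $0<m<M$, the $s$-integral is a strictly positive constant $c(m,M)$, so the cone contribution is comparable to $\kappa c(m,M)\,\lambda a$. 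The homogeneity of $K_0$ (degree $-2$) against the linear profile $y$ is exactly what yields the single power of $\lambda$. The resulting lower bound already gives $\mathcal H_\lambda\ge c\lambda a$, provided the complement contributes a nonnegative amount. This holds because $G\ge0$ for $y>0$ and $K_0>0$ on $\mathcal P_\lambda^+$, so the rest of the integrand is pointwise nonnegative. The lower bound in \eqref{eq:packet_core_coupling} therefore needs no further hypothesis and follows with $c=c_0\kappa c(m,M)$.

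For the upper bound, the task is to control $\int_{\mathcal P_\lambda^+\setminus\mathcal K_{\kappa\lambda}}K_0G$. This region contains the thin sectors $y<mx$ and $y>Mx$ inside $\{x<\kappa\lambda\}$, together with the outer strip $\kappa\lambda\le x<\lambda$. Here I would invoke the assumption that the outer annular contribution is controlled by the cone contribution, read as
\[
\int_{\mathcal P_\lambda^+\setminus\mathcal K_{\kappa\lambda}}K_0G\le C'\int_{\mathcal K_{\kappa\lambda}}K_0G .
\]
Combined with the cone upper bound, this gives $\mathcal H_\lambda\le (1+C')C_0\kappa c(m,M)\lambda a$. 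If one prefers not to absorb the near-axis sectors into that hypothesis, an alternative is available: use smoothness and oddness of $G$ to write $G(1-x,y)\le \|\partial_yG\|_{L^\infty(\mathcal P_\lambda)}\,y$. The same polar computation over all angles $s\in(0,\infty)$ then gives a bound of order $\lambda\|\partial_yG\|_\infty$. This would still need $\|\partial_y G\|_{L^\infty(\mathcal P_\lambda)}\lesssim a$, which is again a packet-regularity assumption.

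The main obstacle is this upper-bound step, and specifically the sectors near $y=0$ and near $x=0$. There, $K_0$ is integrable against $y$ only because of the explicit factor $y$ (near $x=0$, $K_0\cdot y\sim x/y^2$ integrates in $x$ to $O(1)$ per unit $y$). Any pointwise linear-in-$y$ control of $G$ off the cone must come from an assumption, since \eqref{eq:cone_nonhollow} says nothing there. I would therefore state the outer-control hypothesis in the integrated form displayed above. That hypothesis is exactly what the statement assumes, and with it both directions close with constants depending only on $m,M,\kappa,c_0,C_0$ and $C'$. The equivalence $\mathcal H_\lambda\sim A$ then follows from the definition $A=\lambda a$ in \eqref{eq:A_def}.
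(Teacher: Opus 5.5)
Your argument is correct and follows the paper's proof, made explicit. The paper likewise uses $K_0\sim x^{-2}$, $G\sim ay$ and cone width $\sim x$ to get $\sim\lambda a$ on $\mathcal K_{\kappa\lambda}$, and invokes the outer-control hypothesis for the upper bound; your substitution $y=sx$ gives the exact constant $\kappa\int_m^M s^2(1+s^2)^{-2}\,ds$, and you state explicitly that the lower bound needs only $G\ge0$ off the cone. One small detail: $\mathcal K_{\kappa\lambda}\subset\mathcal P_\lambda^+$ requires $M\kappa\le1$, so in general replace $\kappa$ by $\min(\kappa,1/M)$ in the lower-bound constant.
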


\begin{proof}
On the cone \(y\sim x\),
\[
K_0(x,y)\sim x^{-2},
\qquad
G(1-x,y,t)\sim a(t)y\sim a(t)x.
\]
Thus \(K_0G\sim a(t)/x\).  The width of the cone section at radius \(x\) is comparable to \(x\), so integrating in \(y\) gives a contribution comparable to \(a(t)\).  Integrating \(x\in(0,\kappa\lambda)\) yields \(\lambda a(t)\).  The upper bound follows in the same way, together with the assumed outer-annular control.
\end{proof}

\section{Off-diagonal shear suppression and anisotropy}

The dangerous radial-swirl defect is controlled by the ratio
\begin{equation}\label{eq:R_def}
R=\frac{\partial_r\Gamma}{\partial_z\Gamma}.
\end{equation}
Differentiating \(D_t\Gamma=0\) gives
\begin{equation}\label{eq:R_eq}
D_tR
=
-\partial_r u^z
+
(\partial_z u^z-\partial_r u^r)R
+
(\partial_z u^r)R^2.
\end{equation}
The term \(\partial_r u^z\) is therefore the leading source that can rotate the swirl gradient from the axial direction into the radial direction.

\medskip
\noindent\textbf{Off-diagonal shear suppression in the hyperbolic core.}
\begin{theorem}\label{thm:shear_suppression}
Assume the odd hyperbolic sign class for \(G\) and the side-wall Green-kernel expansion of \Cref{thm:green_kernel_expansion}.  Then, in the inner core
\[
x+|y|\le\kappa\lambda,
\]
one has
\begin{equation}\label{eq:shear_suppression}
|\partial_r u^z(1-x,y)|
\le
C\kappa\sigma
+
\operatorname{Tail}_{\rm shear}[G]
+
\operatorname{Smooth}_{\rm shear}[G]
+
O(\lambda)\sigma.
\end{equation}
\end{theorem}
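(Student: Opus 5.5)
The plan is to write $\partial_r u^z$ through the same five-dimensional recovery used in \Cref{thm:green_kernel_expansion}, and then exploit parity. From \eqref{eq:stream_phi}, $\partial_r u^z=3\partial_r\phi+r\partial_r^2\phi$. At the wall point $(1,0)$ every term vanishes for the odd class. Indeed $\phi$ inherits the oddness of $G$ in $z$, because $\Delta_5$ commutes with $z\mapsto -z$ and the Dirichlet condition is symmetric. Hence $\partial_r\phi(1,\cdot)$ and $\partial_r^2\phi(1,\cdot)$ are odd in $y$ and vanish at $y=0$. So $\partial_r u^z(1,0)=0$, and the estimate reduces to controlling the variation of $\partial_r u^z$ over the inner core $x+|y|\le\kappa\lambda$.

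I would bound this variation by a mean-value argument. For $(x,y)$ in the core,
\[
|\partial_r u^z(1-x,y)|\le \kappa\lambda\sup_{\text{core}}|\nabla\partial_r u^z|.
\]
It then suffices to show that $\lambda|\nabla\partial_r u^z|\lesssim\sigma$ plus tail and smooth errors on the core. To do this, write $\nabla\partial_r u^z$ as an integral of third derivatives of the Green function against $G$ and split the domain into three parts.

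\emph{Packet part.} Use the flat reflected kernel from the proof of \Cref{thm:green_kernel_expansion}. After transverse integration this gives kernels homogeneous of degree $-3$ in $(\xi,\zeta)$. The mixed pieces are odd in $\zeta$; the even parts of any kernel cancel against odd $G$. On the packet region $\{|\xi|+|\zeta|\gtrsim\lambda\}$ we have $|K|\lesssim\lambda^{-1}|K_0|$ on cones, and the sign coherence of $G$ lets us dominate by $\lambda^{-1}\mathcal H_\lambda[G]$. \Cref{cor:BS_sign}, together with tail dominance, then converts this to $\lambda^{-1}\sigma$.

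\emph{Near-diagonal part.} Near the evaluation point we cannot use homogeneity alone. Here I would use the cone nondegeneracy \eqref{eq:cone_nonhollow}, which gives $G\sim ay$, so $\nabla G\sim a$ locally. Standard Calder\'on--Zygmund/Schauder bounds for the half-space Dirichlet problem give a contribution $\lesssim a\lambda\cdot(\text{local})$. \Cref{thm:packet_core} gives $a\lambda\sim\mathcal H_\lambda\lesssim\sigma$, so this piece is controlled as well.

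\emph{Remaining parts.} The dyadic annuli $A_j$ produce $\operatorname{Tail}_{\rm shear}[G]$, defined analogously to Definition~\ref{def:tail_masses}. Periodic images and the axis give $\operatorname{Smooth}_{\rm shear}[G]$. The curvature term $-\tfrac{3}{1-x}\partial_x$ is treated exactly as in \Cref{thm:green_kernel_expansion}, costing a factor $O(\lambda)$ relative to the flat model and yielding $O(\lambda)\sigma$.

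The main obstacle is the near-diagonal piece. A pure Biot--Savart bound in terms of $\sigma$ is not available there, since third derivatives of the Green function are not integrable against a merely bounded density. This is why a Lipschitz bound on $G$, inherited from the linear cone profile, must be invoked. I would first try to obtain it from the $C^1$ regularity of the smooth solution, combined with the two-sided cone bound. If that is insufficient off the cone, the near-diagonal contribution would have to be absorbed into $\operatorname{Tail}_{\rm shear}$ as a hypothesis.
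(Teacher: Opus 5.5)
You follow the paper's own outline. Parity gives $\partial_r u^z(1,0)=0$, a first-order Taylor (mean-value) step in the observation point supplies the factor $\kappa$, the resulting kernel is compared with $K_0$ to get $\mathcal H_\lambda[G]$, and \Cref{cor:BS_sign} with tail dominance converts this to $\sigma$. The paper simply asserts a uniform gain $(x+|y|)/\lambda$ over the whole packet and never treats sources near the observation point; you are right to single out that near field.

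There is a genuine gap, however. The estimate everything rests on, $\lambda\sup_{\rm core}|\nabla\partial_r u^z|\lesssim\sigma$ plus exterior terms, is left unproved. It cannot follow from the stated hypotheses if $\Tail_{\rm shear}$ and $\operatorname{Smooth}_{\rm shear}$ denote exterior and global contributions, as in Definition~\ref{def:tail_masses}, because the odd sign class and the kernel expansion impose no local regularity on $G$. To see this, let $\psi\ge0$ be a fixed radial bump and add $\pm h\,\psi\bigl((\cdot-(x_0,\pm y_0))/\epsilon\bigr)$ with $\epsilon=h^{-1}$, at a core point with $y_0>0$ and its mirror image. Put the point off the cone, so that even \eqref{eq:cone_nonhollow} is untouched. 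Parity, sign and tail are preserved, and $\mathcal H_\lambda[G]$ and $\sigma$ change only by $O(h^{-1})$. But the problem is locally a planar Poisson problem at scale $\epsilon$, so $\partial_r^2\phi(1-x_0,y_0)\approx-\tfrac12 h\psi(0)$ and $|\partial_r u^z|\sim h\to\infty$ there. An extra quantitative local hypothesis on $G$ is therefore unavoidable; absorbing the near field into $\Tail_{\rm shear}$ would only rename the missing estimate.

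The remedies you suggest do not supply that hypothesis.
\begin{itemize}
\item The bound \eqref{eq:cone_nonhollow} is pointwise and holds only on the cone. So ``$G\sim ay$, hence $\nabla G\sim a$'' does not follow, and nothing is said off the cone.
\item Qualitative $C^1$ regularity of a smooth solution bounds $\nabla G$ by a constant unrelated to $a$ and $\lambda$.
\item Even the scale-invariant bound $\|\nabla G\|_{L^\infty(\mathcal P_\lambda)}\lesssim a$ controls third derivatives of $\phi$ only in BMO. Hence $\sup_{\rm core}|\nabla\partial_r u^z|$ is not bounded by $a$, and your mean-value step does not close.
\end{itemize}
There is also a hole between your two regions. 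Your packet-part bound $|K|\lesssim\lambda^{-1}K_0$ holds only for $|\Xi|\gtrsim\lambda$, i.e.\ essentially outside $\mathcal P_\lambda$. On the shells $|X|\lesssim|\Xi|\lesssim\lambda$ the third-derivative kernel has size $|\Xi|^{-1}K_0$. Even if $G\sim ay$ on all of $\mathcal P_\lambda$, sign coherence then gives $O(a)$ per dyadic shell, hence $a\log(1/\kappa)$ in total, unless you exploit cancellation instead of absolute values.

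A hypothesis that does close your argument is scale-invariant $C^{1,\alpha}$ control of the packet:
\[
\|\nabla G\|_{L^\infty(\mathcal P_\lambda)}+\lambda^{\alpha}[\nabla G]_{C^\alpha(\mathcal P_\lambda)}\le Ca.
\]
Boundary Schauder estimates for the Dirichlet problem on the half-disc of radius $\lambda$ then give $|\nabla^3\phi|\lesssim a$ on the core, plus exterior contributions. This handles the near field and the intermediate shells at once. The mean-value step, together with $\lambda a\lesssim\mathcal H_\lambda[G]$ from \Cref{thm:packet_core} and \Cref{cor:BS_sign}, then yields \eqref{eq:shear_suppression}.
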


\begin{proof}
Represent \(\partial_r u^z\) by its Green kernel.  At the exact center, the leading shear kernel has parity that cancels against odd \(G\).  Away from the center, Taylor expansion gives a gain \((x+|y|)/\lambda\) relative to the compression kernel.  Thus
\[
|\partial_r u^z(1-x,y)|
\le
C\frac{x+|y|}{\lambda}\mathcal H_\lambda[G]
+
\operatorname{Tail}_{\rm shear}
+
\operatorname{Smooth}_{\rm shear}.
\]
Using \(\sigma\gtrsim\mathcal H_\lambda[G]\) under tail dominance gives \eqref{eq:shear_suppression}.
\end{proof}

\medskip
\noindent\textbf{Radial-swirl anisotropy persistence.}
\begin{lemma}\label{lem:anisotropy}
Assume in the inner core that
\[
\partial_r u^r-\partial_z u^z\ge c_0\sigma,
\qquad
|\partial_z u^r|\le C\sigma,
\]
and that \eqref{eq:shear_suppression} holds with the right-hand side bounded by \(\delta\sigma\), with \(\delta\ll1\).  Then
\[
D_t|R|
\le
-c_0\sigma|R|
+
\delta\sigma
+C\sigma|R|^2.
\]
Consequently, if \(|R(0)|\le\varepsilon\) and \(\delta\ll\varepsilon\ll1\), then \(|R(t)|\le2\varepsilon\) throughout the bootstrap interval.
\end{lemma}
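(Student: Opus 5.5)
The plan is to derive a scalar differential inequality for $|R|$ along particle paths in the inner core directly from the Riccati equation \eqref{eq:R_eq}, and then close a standard continuity (bootstrap) argument on the set $\{|R|\le 2\varepsilon\}$.

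\textbf{Differential inequality for $|R|$.} Along a trajectory where $R\neq0$, $D_t|R|=\operatorname{sgn}(R)\,D_tR$. Wherever $R$ vanishes, $|R|$ is only Lipschitz in $t$, so I would work with the upper Dini derivative $D^+$ along trajectories, or equivalently with $\sqrt{R^2+\epsilon^2}$ and let $\epsilon\to0$. Multiplying \eqref{eq:R_eq} by $\operatorname{sgn}(R)$ treats the three terms as follows.
\begin{itemize}
\item The source term satisfies $-\operatorname{sgn}(R)\,\partial_ru^z\le|\partial_ru^z|\le\delta\sigma$, by \eqref{eq:shear_suppression} with its right-hand side bounded by $\delta\sigma$.
\item The linear term is $(\partial_zu^z-\partial_ru^r)|R|$, and the assumed bound $\partial_ru^r-\partial_zu^z\ge c_0\sigma$ makes it at most $-c_0\sigma|R|$.
\item The quadratic term satisfies $\operatorname{sgn}(R)(\partial_zu^r)R^2\le C\sigma|R|^2$.
\end{itemize}
Adding these gives the stated inequality $D_t|R|\le -c_0\sigma|R|+\delta\sigma+C\sigma|R|^2$.

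\textbf{Bootstrap.} Let $T^*$ be the supremum of times $t$ such that $|R|\le2\varepsilon$ on $[0,t]$ along every trajectory that stays in the core. Continuity of $R$ gives $T^*>0$. On $[0,T^*]$ we have $C\sigma|R|^2\le 2C\varepsilon\,\sigma|R|$. Choosing $\varepsilon\le c_0/(4C)$ then yields
\[
D_t|R|\le -\tfrac{c_0}{2}\sigma|R|+\delta\sigma .
\]
At any point where $|R|=2\varepsilon$, the right-hand side is at most $\sigma(\delta-c_0\varepsilon)<0$, provided $\delta<c_0\varepsilon$; this is the hypothesis $\delta\ll\varepsilon$. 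Since $\sigma>0$ on the bootstrap interval by \cref{cor:BS_sign}, $|R|$ cannot cross the level $2\varepsilon$ from below. Hence $T^*$ equals the end of the interval.

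Alternatively, one can integrate in the variable $s(t)=\int_0^t\sigma$. This gives $|R|\le e^{-c_0s/2}|R(0)|+2\delta/c_0\le \varepsilon+2\delta/c_0\le2\varepsilon$.

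\textbf{Main obstacle.} The step I expect to be delicate is that the hypotheses hold only in the inner core $x+|y|\le\kappa\lambda$, so the argument needs particle paths to remain in the core. I would argue this from the hyperbolic structure: $u^r=0$ on the wall, and axial compression pushes points toward $y=0$. One must also account for points entering the core from outside, where $|R|$ is not yet controlled. I would handle this by including $|R|\le\varepsilon$ on the inflow part of the core boundary as part of the admissible packet bootstrap, consistent with the conditional nature of the main theorem. The barrier argument then applies to every path segment inside the core, each of which starts with $|R|\le\varepsilon$. The shrinking $\lambda(t)$ similarly requires the region-of-validity hypothesis.
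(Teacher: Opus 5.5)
The paper states this lemma without proof. The only ingredient it supplies is the Riccati equation \eqref{eq:R_eq}, and your argument is exactly what that equation yields: multiply by $\operatorname{sgn}R$, bound the three terms by the three hypotheses, use the upper Dini derivative at zeros of $R$, then close with a barrier at $|R|=2\varepsilon$ or with Gronwall in $s=\int_0^t\sigma$ under $\varepsilon\le c_0/(4C)$, $\delta\le c_0\varepsilon/2$. It is correct along any material path on which the hypotheses hold.

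Two small completions are worth adding:
\begin{enumerate}[label=(\alph*)]
\item $R$ stays defined, because $D_t(\partial_z\Gamma)=-(\partial_zu^rR+\partial_zu^z)\,\partial_z\Gamma$ is linear in $\partial_z\Gamma$, so $\partial_z\Gamma$ cannot vanish while $|R|\le2\varepsilon$.
\item The Gronwall version only needs $\sigma\ge0$. This already follows from the hypothesis $|\partial_zu^r|\le C\sigma$, so \cref{cor:BS_sign} is not needed.
\end{enumerate}

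Your ``main obstacle'' is real, and the paper never addresses it. However, the heuristic you give for paths staying in the core is wrong for a fixed core. At the wall, $\partial_ru^r=-\partial_zu^z=\sigma$, so $\dot x=\sigma x$: material points move away from the wall. They leave $\{x+|y|\le\kappa\lambda\}$ through the part of its boundary where $x>|y|$, and enter through the part where $|y|>x$.

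Your inflow condition is therefore a genuine extra hypothesis beyond the lemma as stated. That is harmless in this conditional framework, but it should be listed explicitly. A cleaner alternative is the co-moving core with $\lambda'=-\sigma\lambda$. For the leading field $V_\sigma=\sigma x\partial_x-\sigma y\partial_y$ one gets $\frac{d}{dt}\bigl(x+|y|-\kappa\lambda(t)\bigr)=2\sigma x\ge0$ on its boundary, so nothing enters at leading order. What remains is the velocity-gradient defect near the wall corner $x\ll|y|$, which you could absorb, for example, by shrinking the core at the slightly faster rate $(1+\theta)\sigma$.
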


\section{Differentiated source amplification and curvature control}

Let
\begin{equation}\label{eq:b_c_def}
b(t)=\partial_z\Gamma(1,0,t),
\qquad
c(t)=\partial_z^2\Gamma(1,0,t).
\end{equation}
Assume the boundary point is fixed:
\[
u^r(1,0,t)=u^z(1,0,t)=0.
\]
Then
\begin{equation}\label{eq:b_eq}
b'(t)=\sigma(t)b(t).
\end{equation}
Differentiate the \(G\)-equation in \(z\).  Since \(a(t)=\partial_zG(1,0,t)\), one obtains
\begin{equation}\label{eq:a_exact}
a'(t)=\sigma(t)a(t)+2b(t)^2+2\Gamma_*c(t),
\qquad
\Gamma_*:=\Gamma(1,0,0)>0.
\end{equation}
Thus the only possible negative term is \(2\Gamma_*c(t)\).

\medskip
\noindent\textbf{Swirl-curvature control by odd boundary velocity symmetry.}
\begin{lemma}\label{lem:swirl_curvature}
Let \(v(y,t)=u^z(1,y,t)\).  Assume \(v\) is odd in \(y\), so \(v_{yy}(0,t)=0\).  Then for the boundary transport equation
\[
\Gamma_t+v\Gamma_y=0
\]
at \(y=0\), the curvature ratio
\begin{equation}\label{eq:Q_curv}
Q(t)=\frac{\Gamma_*\Gamma_{yy}(1,0,t)}{\Gamma_y(1,0,t)^2}
\end{equation}
is constant in time.  In particular, if
\[
Q(0)>-1+\varepsilon,
\]
then
\begin{equation}\label{eq:source_lower}
a'(t)
\ge
\sigma(t)a(t)+2\varepsilon b(t)^2.
\end{equation}
\end{lemma}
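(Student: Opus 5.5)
The plan is to work entirely with the one-dimensional boundary transport equation $\Gamma_t+v\Gamma_y=0$ at $y=0$, differentiating it once and twice in $y$ and evaluating at the fixed point. Oddness of $v$ gives $v(0,t)=0$ and $v_{yy}(0,t)=0$, and we write $v_y(0,t)=-\sigma(t)$, as in \eqref{eq:sigma_intro}.

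Differentiating once gives $\Gamma_{yt}+v_y\Gamma_y+v\Gamma_{yy}=0$. At $y=0$ this becomes $b'=\sigma b$, recovering \eqref{eq:b_eq}.

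Differentiating twice gives
\[
\Gamma_{yyt}+v_{yy}\Gamma_y+2v_y\Gamma_{yy}+v\Gamma_{yyy}=0 .
\]
At $y=0$ the terms with $v_{yy}$ and $v$ drop out, leaving $c'=2\sigma c$. Also $\Gamma(1,0,t)=\Gamma_*$ for all $t$, because the point $(1,0)$ is stagnant and $\Gamma$ is transported there. Hence
\[
\frac{d}{dt}\frac{\Gamma_*c}{b^2}=\frac{\Gamma_*c}{b^2}\Bigl(\frac{c'}{c}-\frac{2b'}{b}\Bigr)=\frac{\Gamma_*c}{b^2}(2\sigma-2\sigma)=0 .
\]
To avoid dividing by $c$, one can instead compute $(c b^{-2})'=c'b^{-2}-2cb'b^{-3}=2\sigma cb^{-2}-2\sigma cb^{-2}=0$. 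Either way $Q$ is constant on the whole interval. This requires $b(t)\neq0$, which holds because $b(t)=b(0)\exp\int_0^t\sigma$ and $b(0)\neq0$ whenever $Q(0)$ is defined.

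For the source bound, $Q$ is constant and $Q(0)>-1+\varepsilon$, so $2\Gamma_*c(t)=2Q(t)b(t)^2\ge 2(-1+\varepsilon)b(t)^2$. Substituting into \eqref{eq:a_exact},
\[
a'=\sigma a+2b^2+2\Gamma_*c\ge \sigma a+2\varepsilon b^2 ,
\]
which is \eqref{eq:source_lower}.

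The main point to check is the hypothesis bookkeeping rather than the computation. The boundary trace of the transport equation must be legitimate: $u^r=0$ on $r=1$ means $D_t$ restricted to the wall is exactly $\partial_t+v\partial_y$. The $y$-derivatives at $y=0$ must therefore be taken within the wall, and they commute with the restriction by smoothness. We also need $v_{yy}(0,t)=0$ to follow from oddness of $v$, which in the exact odd class follows from oddness of $u^z$ in $z$. Finally, the identification of $c$ with the purely tangential derivative $\partial_z^2\Gamma(1,0,t)$ is exact, so no radial terms enter.
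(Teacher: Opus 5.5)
Your proof is correct and follows the paper's argument: differentiate the wall transport equation once and twice at the stagnation point, use $v_{yy}(0,t)=0$ to get $c'=2\sigma c$ alongside $(b^2)'=2\sigma b^2$, then substitute $2\Gamma_*c=2Qb^2\ge 2(-1+\varepsilon)b^2$ into \eqref{eq:a_exact}. Your checks that $b(t)=b(0)e^{\int_0^t\sigma}\neq0$ and that $\Gamma(1,0,t)\equiv\Gamma_*$ fill in details the paper leaves implicit. One caution: the closing appeal to the exact odd class is unnecessary, since oddness of $v$ is a hypothesis here, and it conflicts with $\Gamma_*>0$: in that class $\Gamma$ is odd, so $\Gamma_*=0$ and the curvature term vanishes trivially.
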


\begin{proof}
The boundary equation is \(\Gamma_t+v\Gamma_y=0\), with \(v(0,t)=0\), \(v_y(0,t)=-\sigma(t)\).  Differentiating once gives \(b'=\sigma b\).  Differentiating twice gives
\[
c'=2\sigma c-v_{yy}(0,t)b.
\]
Oddness of \(v\) implies \(v_{yy}(0,t)=0\), hence \(c'=2\sigma c\).  Since \((b^2)'=2\sigma b^2\), the ratio \(Q=\Gamma_*c/b^2\) is constant.  The lower bound \eqref{eq:source_lower} follows from \eqref{eq:a_exact}.
\end{proof}

\section{Corrected odd-symmetry ODE blow-up comparison}

Let \(\lambda(t)\) follow the compression scale:
\begin{equation}\label{eq:lambda_scale}
\lambda'(t)=-\sigma(t)\lambda(t).
\end{equation}
Define
\begin{equation}\label{eq:A_B_def}
A(t)=\lambda(t)a(t),
\qquad
B(t)=\lambda(t)^{1/2}b(t).
\end{equation}

\medskip
\noindent\textbf{Corrected odd-symmetry ODE comparison.}
\begin{theorem}\label{thm:ODE_comparison}
Assume on a bootstrap interval that
\begin{equation}\label{eq:sigma_A_lower}
\sigma(t)\ge c_0A(t),
\end{equation}
and
\begin{equation}\label{eq:a_lower_assumption}
a'(t)\ge \sigma(t)a(t)+c_1b(t)^2,
\end{equation}
while \(b'(t)=\sigma(t)b(t)\).  Then
\begin{equation}\label{eq:A_B_ineq}
A'(t)\ge c_1B(t)^2,
\qquad
B'(t)\ge \frac{c_0}{2}A(t)B(t).
\end{equation}
If \(A(0)>0\) and \(B(0)>0\), the comparison system blows up in finite time.
\end{theorem}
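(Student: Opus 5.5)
The plan is to differentiate $A=\lambda a$ and $B=\lambda^{1/2}b$ directly, using the scale law \eqref{eq:lambda_scale}. The $\sigma$-terms should cancel exactly in $A'$ and partially in $B'$. Finite-time blow-up will then follow from a doubling-time argument.

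\textbf{Step 1: the inequality for $A$.} By \eqref{eq:lambda_scale} and \eqref{eq:a_lower_assumption},
\[
A'=\lambda' a+\lambda a'\ \ge\ -\sigma\lambda a+\lambda(\sigma a+c_1b^2)=c_1\lambda b^2=c_1B^2 .
\]
This uses only $\lambda>0$, which holds because $\lambda(t)=\lambda(0)\exp(-\int_0^t\sigma)$.

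\textbf{Step 2: the inequality for $B$.} By \eqref{eq:lambda_scale} and $b'=\sigma b$,
\[
B'=\tfrac12\lambda^{-1/2}\lambda' b+\lambda^{1/2}b'=-\tfrac12\sigma B+\sigma B=\tfrac12\sigma B .
\]
So $B$ keeps its sign, and $B>0$ whenever $B(0)>0$. From Step 1, $A$ is nondecreasing, so $A\ge A(0)>0$. Then \eqref{eq:sigma_A_lower} gives $\sigma\ge c_0A>0$, and multiplying by $B>0$ yields $B'\ge\frac{c_0}{2}AB$. This gives \eqref{eq:A_B_ineq}.

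\textbf{Step 3: finite-time blow-up.} I expect this to be the main obstacle. The system consists only of one-sided differential inequalities, so a phase-plane first integral is unavailable: $A'$ may be arbitrarily large, and one cannot compare $dB/dA$. Instead I would use a doubling-time argument in $t$.

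Since $A$ and $B$ are nondecreasing and positive, fix $t_k$ with $B(t_k)=B_k:=2^kB(0)$. For $t\ge t_k$, Step 1 gives
\[
A(t)\ge A(t_k)+c_1\int_{t_k}^t B^2\ge c_1B_k^2(t-t_k).
\]
Integrating $(\log B)'\ge\frac{c_0}{2}A$ then gives
\[
\log\frac{B(t)}{B_k}\ \ge\ \frac{c_0c_1}{4}B_k^2(t-t_k)^2 .
\]
Hence $B$ reaches $2B_k$ within a time
\[
\tau_k\le \frac{C}{B_k},\qquad C=\sqrt{\frac{4\ln2}{c_0c_1}} .
\]
For the base step, $B'\ge\frac{c_0}{2}A(0)B$ shows that $B$ first reaches $B_0=B(0)$ trivially and then keeps growing, so every $t_k$ exists as long as the solution does. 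Summing over $k$,
\[
T^*\le\sum_{k\ge0}\tau_k\le\frac{C}{B(0)}\sum_{k\ge0}2^{-k}=\frac{2C}{B(0)}<\infty,
\]
with $B(t)\to\infty$ as $t\uparrow T^*$. By Step 1, $A\to\infty$ as well, which is the claimed finite-time blow-up of the comparison system.
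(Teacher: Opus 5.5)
Your Steps 1 and 2 match the paper's computation: $A'=-\sigma A+\lambda a'\ge c_1B^2$ and $B'/B=\tfrac12\sigma$. You are slightly more careful than the paper in noting that $B>0$ is needed before multiplying $\sigma\ge c_0A$ by $B$. The argument is correct.

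The genuine difference is in the blow-up step. The paper works with the equality system $\underline A'=c_1\underline B^2$, $\underline B'=\tfrac{c_0}{2}\underline A\,\underline B$. It eliminates time to get the first integral $\underline B^2=\tfrac{c_0}{2c_1}\underline A^2+C_0$, and then reduces to the Riccati bound $\underline A'\ge c\,\underline A^2$ for large $\underline A$. This gives an explicit conserved quantity and the asymptotic ratio $\underline B/\underline A\to\sqrt{c_0/(2c_1)}$. However, it only treats the model ODE. Transferring blow-up to the actual $A,B$ tacitly needs a comparison principle for this cooperative system, whose right-hand sides are nondecreasing in the off-diagonal variable on the positive quadrant. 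So your remark that a phase-plane integral is ``unavailable'' is accurate only in that it requires this extra comparison step.

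Your doubling-time argument works directly on the two differential inequalities, using only positivity and monotonicity. It gives the explicit bound $T^*\le 2C/B(0)$ with $C=\sqrt{4\ln 2/(c_0c_1)}$, which does not depend on $A(0)\ge0$. It also covers both readings of ``the comparison system blows up,'' since the equality system itself satisfies your inequalities. Because it only integrates the inequalities, it adapts to the later Dini version $D^+\mathfrak M_{\delta_c}\ge c_1B^2$ as well, provided $\mathfrak M_{\delta_c}$ is continuous.

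The base-step sentence is superfluous: $t_0=0$, and existence of each $t_{k+1}\le t_k+C/B_k$ already follows from your estimate and the continuity of $B$.
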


\begin{proof}
Since \(A=\lambda a\),
\[
A'=\lambda'a+\lambda a'=-\sigma A+\lambda a'.
\]
Using \eqref{eq:a_lower_assumption},
\[
A'\ge -\sigma A+\lambda\sigma a+c_1\lambda b^2=c_1B^2.
\]
Also
\[
\frac{B'}{B}=\frac12\frac{\lambda'}{\lambda}+\frac{b'}b=-\frac12\sigma+\sigma=\frac12\sigma,
\]
which together with \eqref{eq:sigma_A_lower} gives \(B'\ge(c_0/2)AB\).  The comparison system
\[
\underline A'=c_1\underline B^2,
\qquad
\underline B'=\frac{c_0}{2}\underline A\underline B
\]
blows up because
\[
\frac{d\underline B}{d\underline A}
=\frac{c_0}{2c_1}\frac{\underline A}{\underline B},
\]
so \(\underline B^2=(c_0/(2c_1))\underline A^2+C_0\), and hence \(\underline A'\ge c\underline A^2\) for large \(\underline A\).
\end{proof}

\section{Conditional packet theorem}

\medskip
\noindent\textbf{Conditional boundary hyperbolic packet amplification.}
\begin{theorem}\label{thm:conditional_packet}
Let \(u\) be a smooth axisymmetric Euler solution with swirl in the periodic cylinder \(\Omega=\{0<r<1,z\in\mathbb T\}\), satisfying \(u^r(1,z,t)=0\).  Assume on a bootstrap interval that:
\begin{enumerate}[label=(\roman*)]
\item the odd hyperbolic sign class holds for \(G\);
\item the side-wall Green-kernel expansion of \Cref{thm:green_kernel_expansion} holds with dominated smooth errors;
\item tail dominance holds, or first-threshold reselection excludes competing packets;
\item packet--core coupling holds, so \(\mathcal H_\lambda[G]\sim A(t)\);
\item inner-core shear suppression and anisotropy persistence hold;
\item the swirl-curvature ratio satisfies \(Q(0)>-1+\varepsilon\) and is preserved by odd boundary velocity symmetry.
\end{enumerate}
Then the normalized amplitudes
\[
A(t)=\lambda(t)\partial_zG(1,0,t),
\qquad
B(t)=\lambda(t)^{1/2}\partial_z\Gamma(1,0,t)
\]
satisfy
\[
A'(t)\ge cB(t)^2,
\qquad
B'(t)\ge cA(t)B(t).
\]
Consequently, the corresponding comparison system blows up in finite time.
\end{theorem}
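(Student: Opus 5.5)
The plan is to reduce \Cref{thm:conditional_packet} to \Cref{thm:ODE_comparison}. That means verifying its two inputs on the bootstrap interval: the compression lower bound \eqref{eq:sigma_A_lower}, $\sigma\ge c_0A$, and the differentiated source inequality \eqref{eq:a_lower_assumption}, $a'\ge\sigma a+c_1b^2$. The companion identity $b'=\sigma b$ is \eqref{eq:b_eq}. It holds because the exact odd class fixes the boundary point: $u^r(1,z)=0$ by impermeability, and $u^z(1,0)=0$ by oddness of $u^z$ in $z$, which is inherited from oddness of $G$ through the $z$-even recovery kernel.

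\textbf{Step 1 (compression lower bound).} By hypothesis (i), Corollary~\ref{cor:BS_sign} applies:
\[
\sigma\ge (c_0-C\lambda)\mathcal H_\lambda[G]-\Tail_\lambda[G]-\operatorname{Smooth}_\lambda[G].
\]
Hypothesis (iii) gives $\Tail_\lambda\le\eta\mathcal H_\lambda$, and hypothesis (ii) gives $\operatorname{Smooth}_\lambda\le\eta\mathcal H_\lambda$. Taking $\eta$ and $\lambda$ small yields $\sigma\ge \tfrac{c_0}{2}\mathcal H_\lambda[G]$. Hypothesis (iv), through \Cref{thm:packet_core}, gives $\mathcal H_\lambda\ge cA$, and therefore $\sigma\ge c_0' A$. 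Along the way I would note that $\lambda(t)$ is decreasing by \eqref{eq:lambda_scale}, since $\sigma>0$. So once the smallness of $\lambda$ holds at $t=0$ it persists, and the packet stays in the validity region of \Cref{thm:green_kernel_expansion}.

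\textbf{Step 2 (source inequality).} Start from the exact identity \eqref{eq:a_exact}, $a'=\sigma a+2b^2+2\Gamma_*c$. By hypothesis (vi) and Lemma~\ref{lem:swirl_curvature}, $Q(t)=Q(0)>-1+\varepsilon$, hence $2b^2+2\Gamma_*c=2b^2(1+Q)\ge 2\varepsilon b^2$. This gives \eqref{eq:source_lower} with $c_1=2\varepsilon$. The role of hypothesis (v) is to justify \eqref{eq:a_exact} itself. Differentiating $D_tG=r^{-4}\partial_z(\Gamma^2)$ in $z$ at $(1,0)$ produces transport-gradient terms $-\partial_zu^r\,\partial_rG-\partial_zu^z\,\partial_zG$. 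It also produces, from $\partial_z^2(\Gamma^2)=2(\partial_z\Gamma)^2+2\Gamma\partial_z^2\Gamma$, the source terms. The first term vanishes at the wall because $u^r(1,\cdot)=0$, so only $\sigma a$ survives. Shear suppression together with Lemma~\ref{lem:anisotropy} keeps $|R|\le2\varepsilon$, which prevents radial swirl gradients from contaminating $b$ through rotation of $\nabla\Gamma$. Finally, $\Gamma(1,0,t)=\Gamma_*$ because $\Gamma$ is transported and the point is fixed.

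\textbf{Step 3 (conclusion).} Apply \Cref{thm:ODE_comparison} to obtain $A'\ge c_1B^2$ and $B'\ge\tfrac{c_0'}{2}AB$. The comparison principle for the monotone system $(\underline A,\underline B)$ then gives $A\ge\underline A$ and $B\ge\underline B$. The bound $\underline B^2=\tfrac{c_0'}{2c_1}\underline A^2+C$ yields $\underline A'\gtrsim\underline A^2$, so the comparison system blows up in finite time. I expect the main obstacle to be Step 1: keeping the tail and smooth corrections uniformly dominated by $\mathcal H_\lambda\sim A$ while $\lambda\to0$ and $A$ grows. Here I would rely strictly on hypotheses (ii)–(iii) as stated, rather than attempt a reselection argument, and treat any loss of these bounds as exit from the bootstrap interval.
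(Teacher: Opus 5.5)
Your proposal is correct and follows essentially the paper's route: the theorem has no separate proof there and is simply the assembly of \Cref{cor:BS_sign} with tail/smooth dominance and packet--core coupling (giving $\sigma\ge cA$), \Cref{lem:swirl_curvature} (giving $a'\ge\sigma a+2\varepsilon b^2$), $b'=\sigma b$ at the fixed wall point, and \Cref{thm:ODE_comparison}. Three small corrections. First, hypothesis (v) is not needed for \eqref{eq:a_exact}, which is an exact pointwise identity at the fixed wall point. Second, $u^z(1,0,t)=0$ should come from the odd boundary velocity in (vi) rather than from global oddness of $G$, since (i) is only local to $\mathcal P_\lambda$. Third, with the center fixed, (i) forces $2\Gamma_*b(t)=\partial_tG(1,0,t)=0$, so with $b\ne0$ the hypotheses can only hold when $\Gamma_*=0$; then $Q\equiv0$ and the curvature step is trivial, which is the parity conflict the paper itself flags in its exact-odd section.
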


\begin{remark}
\Cref{thm:conditional_packet} is a conditional amplification criterion.  It shows that the stated hyperbolic packet hypotheses force an explosive local ODE mechanism.  The unresolved issue is persistence: one must show that a smooth Euler trajectory satisfies these hypotheses up to the comparison time, or identify the first mechanism by which they fail.
\end{remark}

\section{Exact odd symmetry, maximal conic score, and Dini closure}

This section records the strongest closed form of the fixed-center boundary-packet argument.  The moving-center parity formulation is replaced by an exact odd symmetry about the fixed plane \(z=0\).  This removes the earlier parity conflict and eliminates the curvature-ratio obstruction.

\subsection{Exact odd class and fixed center}

We impose the exact symmetry class
\begin{equation}\label{eq:exact_odd_class_final}
\Gamma(r,-z,t)=-\Gamma(r,z,t),
\qquad
G(r,-z,t)=-G(r,z,t).
\end{equation}
The compatible meridional velocity parity is
\begin{equation}\label{eq:velocity_parity_final}
u^r(r,-z,t)=u^r(r,z,t),
\qquad
u^z(r,-z,t)=-u^z(r,z,t).
\end{equation}
Consequently the plane \(z=0\) is invariant and the boundary hyperbolic point remains fixed at \((r,z)=(1,0)\).

This symmetry is dynamically compatible with the axisymmetric Euler equations.  Indeed, if \(\Gamma\) is odd, then \(\Gamma^2\) is even and \(\partial_z(\Gamma^2)\) is odd.  Hence the source in
\[
D_tG=r^{-4}\partial_z(\Gamma^2)
\]
preserves oddness of \(G\).  The alternative, in which \(G\) is odd but \(\Gamma(1,0,t)>0\) and \(\partial_z\Gamma(1,0,t)>0\), is not dynamically compatible with exact oddness of \(G\), since \(\partial_z(\Gamma^2)\) then has an even component at \(z=0\).

In the exact odd class,
\begin{equation}\label{eq:Gamma_zero_center}
\Gamma(1,0,t)=0,
\qquad
b(t):=\partial_z\Gamma(1,0,t)>0.
\end{equation}
We also set
\begin{equation}\label{eq:a_b_final}
a(t)=\partial_zG(1,0,t),
\qquad
\sigma(t)=-\partial_z u^z(1,0,t),
\end{equation}
and choose the compression scale
\begin{equation}\label{eq:lambda_final}
\lambda'(t)=-\sigma(t)\lambda(t).
\end{equation}
The normalized variables are
\begin{equation}\label{eq:A_B_final}
A(t)=\lambda(t)a(t),
\qquad
B(t)=\lambda(t)^{1/2}b(t).
\end{equation}

\medskip
\noindent\textbf{Exact odd point identities.}
\begin{lemma}\label{lem:exact_odd_point_identities}
In the exact odd class, at the fixed boundary point \((r,z)=(1,0)\),
\begin{equation}\label{eq:b_exact_final}
b'(t)=\sigma(t)b(t),
\end{equation}
and
\begin{equation}\label{eq:a_exact_final}
a'(t)=\sigma(t)a(t)+2b(t)^2.
\end{equation}
Consequently,
\begin{equation}\label{eq:A_B_exact_final}
A'(t)=2B(t)^2,
\qquad
B'(t)=\frac12\sigma(t)B(t).
\end{equation}
\end{lemma}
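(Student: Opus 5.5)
The plan is to differentiate the two transport laws $D_t\Gamma=0$ and $D_tG=r^{-4}\partial_z(\Gamma^2)$ once in $z$ and evaluate at the fixed boundary point $(r,z)=(1,0)$. Two facts make every advective and stretching cross term vanish there except the compression term: the wall condition \eqref{eq:impermeable} and the exact odd symmetry \eqref{eq:exact_odd_class_final}--\eqref{eq:velocity_parity_final}. The identities for $A,B$ then follow from the choice \eqref{eq:lambda_final} of $\lambda$ by the same algebra as in the proof of \Cref{thm:ODE_comparison}, now with equalities.

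First I will record the pointwise facts at $(1,0,t)$.
\begin{itemize}
\item $u^r(1,z,t)=0$ for all $z$, so $u^r=0$ and also $\partial_zu^r=0$ there, since the wall is a line of constant $r$.
\item $u^z$ is odd in $z$, so $u^z(1,0,t)=0$; with $u^r=0$ this gives the fixed stagnation point.
\item $\Gamma(1,0,t)=0$ and $G(r,0,t)=0$ for all $r$, so $\partial_rG(1,0,t)=0$ and $\partial_r\Gamma(1,0,t)=0$.
\item By \eqref{eq:a_b_final}, $\partial_zu^z(1,0,t)=-\sigma(t)$.
\end{itemize}
Smoothness of $u$ up to the side wall justifies differentiating the equations and evaluating at the boundary point. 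Because the point is fixed, $\frac{d}{dt}f(1,0,t)=\partial_tf(1,0,t)$ for any smooth $f$.

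Next, applying $\partial_z$ to $D_t\Gamma=0$ gives
\[
D_t(\partial_z\Gamma)+(\partial_zu^r)\partial_r\Gamma+(\partial_zu^z)\partial_z\Gamma=0.
\]
At $(1,0)$ the transport part reduces to $b'(t)$, the second term vanishes, and the third equals $-\sigma b$. This yields \eqref{eq:b_exact_final}.

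Similarly, applying $\partial_z$ to the $G$-equation gives
\[
D_t(\partial_zG)+(\partial_zu^r)\partial_rG+(\partial_zu^z)\partial_zG=r^{-4}\bigl(2(\partial_z\Gamma)^2+2\Gamma\,\partial_z^2\Gamma\bigr).
\]
Note that $r^{-4}$ does not depend on $z$. At $(1,0)$ the left side becomes $a'-\sigma a$. On the right, $r=1$ and $\Gamma=0$, so the curvature term drops and only $2b^2$ survives. This gives \eqref{eq:a_exact_final}, and it is exactly why the curvature-ratio obstruction of \Cref{lem:swirl_curvature} disappears in this class.

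Finally, using $\lambda'=-\sigma\lambda$:
\[
A'=-\sigma\lambda a+\lambda(\sigma a+2b^2)=2\lambda b^2=2B^2,
\]
\[
B'=\tfrac12\lambda^{-1/2}\lambda' b+\lambda^{1/2}b'=\bigl(-\tfrac12\sigma+\sigma\bigr)B=\tfrac12\sigma B.
\]
Writing $B'$ directly this way avoids dividing by $b$.

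There is no genuine analytic obstacle. The one step needing care is confirming that every cross term vanishes at the boundary point: $\partial_zu^r=0$ from the wall, and $\partial_rG=\Gamma=0$ from the parity. In particular, the $\partial_rG$ and $\partial_r\Gamma$ terms vanish for two independent reasons, which makes the identities robust.
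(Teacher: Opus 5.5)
Your proposal is correct and takes essentially the same approach as the paper. You differentiate both transport laws in $z$ and evaluate at the fixed stagnation point $(1,0)$, where $u^r=u^z=\partial_z u^r=0$; you use $\Gamma(1,0,t)=0$ to reduce $\partial_z^2(\Gamma^2)$ to $2b^2$; and you differentiate $A=\lambda a$ and $B=\lambda^{1/2}b$ using $\lambda'=-\sigma\lambda$. Your version is only more explicit about the cross terms (and the extra parity vanishing of $\partial_rG$ and $\partial_r\Gamma$).
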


\begin{proof}
The transported swirl equation is \(D_t\Gamma=0\).  At \((1,0)\), the boundary condition gives \(u^r(1,0,t)=0\), and oddness of \(u^z\) gives \(u^z(1,0,t)=0\).  Differentiating \(D_t\Gamma=0\) in \(z\), evaluating at \((1,0)\), and using \(\partial_z u^r(1,0,t)=0\), gives
\[
b'=-\partial_z u^z(1,0,t)b=\sigma b.
\]
Next, differentiate
\[
D_tG=r^{-4}\partial_z(\Gamma^2)
\]
in \(z\) and evaluate at \((1,0)\).  The same fixed-point cancellations give
\[
a'=\sigma a+\partial_z^2(\Gamma^2)(1,0,t).
\]
Since \(\Gamma(1,0,t)=0\),
\[
\partial_z^2(\Gamma^2)(1,0,t)=2(\partial_z\Gamma(1,0,t))^2=2b^2.
\]
This proves \eqref{eq:a_exact_final}.  Finally, differentiating \(A=\lambda a\) and \(B=\lambda^{1/2}b\), using \(\lambda'/\lambda=-\sigma\), gives \eqref{eq:A_B_exact_final}.
\end{proof}

\subsection{Fixed-center conic packets}

For fixed apertures \(0<m<M<\infty\), define the upper conic packet
\begin{equation}\label{eq:K_conic_final}
\mathcal K_\lambda^{m,M}
=
\{0<x<\lambda,\ mx<y<Mx\},
\qquad
x=1-r,
\quad y=z.
\end{equation}
The lower cone is determined by odd reflection.  The conic hyperbolic mass is
\begin{equation}\label{eq:H_conic_final}
\mathcal H_\lambda^{m,M}[G]
=
\int_{\mathcal K_\lambda^{m,M}}
\frac{xy}{(x^2+y^2)^2}G(1-x,y,t)\,dx\,dy.
\end{equation}
If \(G(1-x,y,t)\sim a(t)y\) in the cone, then
\[
\mathcal H_\lambda^{m,M}[G]\sim \lambda(t)a(t)=A(t).
\]

\begin{definition}[Admissible fixed-center conic packet]\label{def:admissible_conic_packet}
A conic packet \(P=(\lambda,m,M)\) is called admissible at time \(t\) if the following hold with fixed small constants \(\eta,\delta,\varepsilon\):
\begin{enumerate}[label=(\roman*)]
\item exact odd symmetry and positive upper-cone sign:
\[
G(1-x,-y,t)=-G(1-x,y,t),
\qquad
G(1-x,y,t)\ge0\quad(y>0);
\]
\item linear-core nondegeneracy:
\[
G(1-x,y,t)=a(t)y+O(\delta a(t)y)
\]
on the inner cone \(0<x<\kappa\lambda\), \(mx<y<Mx\);
\item source-core nondegeneracy:
\[
\Gamma(1-x,y,t)=b(t)y+O(\delta b(t)y)
\]
on the same inner cone;
\item tail and smooth-kernel domination:
\[
\Tail_P[G]+\operatorname{Smooth}_P[G]\le \eta\mathcal H_P[G];
\]
\item inner-core anisotropy:
\[
\left|\frac{\partial_r\Gamma}{\partial_z\Gamma}\right|\le \varepsilon.
\]
\end{enumerate}
\end{definition}

For an admissible packet, the side-wall Biot--Savart sign and packet-core coupling give
\begin{equation}\label{eq:sigma_M_admissible}
\sigma(t)\ge c\mathcal H_P[G(t)]
\sim cA(t).
\end{equation}

\subsection{Admissibility error closure}

The Dini-derivative closure below depends on the fact that the error terms remain lower order than the positive source contribution.  We record this in a single lemma.

\medskip
\noindent\textbf{Admissibility error closure.}
\begin{lemma}\label{lem:admissibility_error_closure}
Let \(P=(\lambda,m,M)\) be an admissible conic packet.  Then the total error in differentiating the packet functional satisfies
\begin{equation}\label{eq:Err_adm_bound_final}
\operatorname{Err}_{\rm adm}(t)
\le
C(\eta+\delta+\varepsilon+\kappa+\lambda)B(t)^2.
\end{equation}
In particular, by choosing the admissibility constants and the initial packet scale sufficiently small,
\begin{equation}\label{eq:Err_adm_absorb_final}
\operatorname{Err}_{\rm adm}(t)
\le
\frac12 c_0B(t)^2,
\end{equation}
where \(c_0B^2\) is the positive conic source contribution.
\end{lemma}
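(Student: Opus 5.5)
The plan is to differentiate the conic packet functional $\mathcal H_P[G]=\mathcal H_\lambda^{m,M}[G]$ in time along the Euler flow. We then split the result into one exact main term, equal to a positive multiple of $B^2$, and a finite list of error terms. Each error will be bounded by one of the admissibility constants of \Cref{def:admissible_conic_packet} times $B^2$.

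\emph{Step 1: the time derivative.} Write $\mathcal H_P=\int_{\mathcal K_\lambda}K_0\,G\,dx\,dy$. Differentiating in $t$ produces four kinds of term:
\begin{itemize}
\item the moving-domain term from $\lambda'=-\sigma\lambda$, which only involves the cap $x=\lambda$ of the cone;
\item the term $\int K_0\,\partial_tG$;
\item the transport part of $\partial_tG=-u^r\partial_rG-u^z\partial_zG+\cdots$, which we integrate by parts onto $K_0$;
\item the source term $\int K_0\,r^{-4}\partial_z(\Gamma^2)$.
\end{itemize}

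\emph{Step 2: the main term.} Insert the source-core nondegeneracy (iii), $\Gamma=b y(1+O(\delta))$. Also use (v) to replace $\partial_z\Gamma$ by $b(1+O(\delta+\varepsilon))$, since $\partial_r\Gamma$ is $\varepsilon$-small relative to $\partial_z\Gamma$. Together with $r^{-4}=1+O(\lambda)$, this gives
\[
\partial_z(\Gamma^2)=2b^2y\,(1+O(\delta+\varepsilon+\lambda)).
\]
Integrating $K_0\cdot 2b^2y$ over the inner cone $x<\kappa\lambda$ gives $c\,\kappa\lambda b^2$, which is of size $B^2$. This is the source $c_0B^2$; absorbing the fixed $\kappa$ into $c_0$, it matches $A'=2B^2$ from \Cref{lem:exact_odd_point_identities}. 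The relative errors $O(\delta+\varepsilon+\lambda)$ multiply it directly.

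\emph{Step 3: the transport and stretching terms.} After integrating by parts, the transport terms are $\int G\,u\cdot\nabla K_0$ plus a divergence correction of order $u^r/r$. Near the corner, linearize the velocity as $u^z\approx-\sigma y$ and $u^r\approx -\sigma' x$, with the coefficient fixed by incompressibility. Because $K_0$ is homogeneous of degree $-2$, the linear hyperbolic field acting on it reproduces exactly the scaling term $\sigma\mathcal H$. This cancels against the boundary-flux term from $\lambda'$ and against the $\sigma a$ part of \eqref{eq:a_exact_final}, which is the functional version of the cancellation in \Cref{lem:exact_odd_point_identities}. What remains are:
\begin{itemize}
\item the off-diagonal shear $\partial_ru^z$, bounded by $(C\kappa+\eta+\lambda)\sigma$ via \Cref{thm:shear_suppression} together with the tail bound (iv);
\item the nonlinear velocity Taylor remainders, of relative size $\kappa+\lambda$ on the core.
\end{itemize}
These multiply $\sigma\mathcal H\sim\sigma A$.

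\emph{Step 4: converting $\sigma A$ into $B^2$ (main obstacle).} The Step 3 errors are a priori of size $(\kappa+\eta+\lambda)\sigma A$ with $\sigma\gtrsim A$ by \eqref{eq:sigma_M_admissible}. This is $A^2$, not $B^2$, and \eqref{eq:Err_adm_bound_final} is stated in terms of $B^2$. The first approach I would try is the cancellation observation. The uncancelled pieces are not multiplied by the full $\sigma A$: they come from the mismatch between the true $G$ and its linear profile $ay$ in (ii), so they carry an extra factor of $\delta$. That factor reduces them to $\delta\,\lambda\,\partial_tG$-type quantities, which are again sourced by $b^2$ through the $G$-equation.

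If this does not close, I would fall back on the comparison structure. Up to the blow-up time, $B^2$ controls $A^2$ within the bootstrap, because $B^2-\tfrac{c_0}{2c_1}A^2$ is bounded below, as in \Cref{thm:ODE_comparison}, once $A$ and $B$ are large. Then $\sigma A\lesssim B^2+C$, and the constant can be absorbed for large amplitudes.

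The tail and smooth contributions are bounded by $\eta$ times the main term, directly from (iv). Summing all the errors gives \eqref{eq:Err_adm_bound_final}. Choosing $\eta,\delta,\varepsilon,\kappa,\lambda(0)$ small then gives \eqref{eq:Err_adm_absorb_final}, using that $\lambda$ only decreases under \eqref{eq:lambda_final}.
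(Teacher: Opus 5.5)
The genuine gap is your Step 4, and neither remedy closes it. The conditions of \Cref{def:admissible_conic_packet} are invariant under \(G\mapsto NG\) with \(\Gamma\) fixed: (i), (ii), (iv) are homogeneous in \(G\), and (iii), (v) do not involve \(G\). At a single time, \(G\) and \(\Gamma\) can be prescribed independently. The transport, shear, velocity-Taylor and cone-flux errors are bilinear in \(G\) through the induced velocity and generically nonzero. So under this rescaling they grow like \(N^2\), i.e. like \(\sigma\mathcal H_P\), while \(B^2\) is unchanged. Hence no argument using only admissibility at time \(t\) can yield \eqref{eq:Err_adm_bound_final}.

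Your first remedy attaches only the \(N\)-independent factor \(\delta\), so \(\delta\sigma A\) is still not controlled by \(B^2\). Moreover, the shear and velocity-Taylor errors are present even when \(G=ay\) exactly on the cone, so they carry no \(\delta\). The phrase ``\(\lambda\partial_tG\) is sourced by \(b^2\)'' also ignores the transport part \(-u\cdot\nabla G\sim\sigma a\) of \(\partial_tG\).

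Your fallback misreads \Cref{thm:ODE_comparison}. The quantity \(\underline B^2-\frac{c_0}{2c_1}\underline A^2\) is conserved by the comparison system with equalities. The inequalities \(A'\ge c_1B^2\), \(B'\ge\frac{c_0}{2}AB\) give no upper bound on \(A\) in terms of \(B\). What does work uses the exact identities of \Cref{lem:exact_odd_point_identities}. From \(A'=2B^2\), \((B^2)'=\sigma B^2\) and \(\sigma\ge4kA\) one gets \((B^2-kA^2)'=B^2(\sigma-4kA)\ge0\). So \(B^2\ge kA^2\) persists \emph{provided it holds at \(t=0\)}, and an upper bound \(\sigma\lesssim A\) then gives \(\sigma A\lesssim B^2\). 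Absorbing the additive constant ``for large amplitudes'' is unavailable near \(t=0\). One can absorb it using \(B^2\ge B(0)^2\), but then the smallness thresholds depend on \(A(0)/B(0)\). Either way you import a source-dominance condition and a bootstrap on a time interval, neither of which follows from admissibility at one instant.

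The paper's own proof of this lemma does not make the conversion either. It bounds the errors by \(C\lambda\mathcal H_P\), \(\eta\mathcal H_P\), \(C\delta\mathcal H_P\) and asserts that the Biot--Savart sign and packet--core coupling make them lower order than \(B^2\). Those results, however, relate \(\mathcal H_P\) only to \(\sigma\) and \(A\). The missing input is exactly \eqref{eq:source_dominance_initial}, which the paper adds as a hypothesis in \Cref{thm:narrow_admissibility_absorption} in order to write \(\sigma\mathfrak M_{\delta_c}\lesssim\mathfrak M_{\delta_c}^2\lesssim B^2\). Your diagnosis is right, but the lemma needs that hypothesis, or your propagated version of it, stated explicitly.

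There are two further problems. First, the cancellation in Step 3 does not come from homogeneity. The field \(V_\sigma=\sigma(x\partial_x-y\partial_y)\) is not the Euler field \(x\partial_x+y\partial_y\). By \Cref{lem:diag_kernel_transport}, \(V_\sigma K_0=-4\sigma\frac{x^2-y^2}{x^2+y^2}K_0\), which has full size \(\sigma K_0\) on a fixed-aperture cone; the paper's later narrow diagonal cones are designed to fix this. In the model case \(G=ay\), \(\Gamma=by\), \(u=V_\sigma\) (with \(r^{-4}\) replaced by \(1\)), the compression terms do cancel and \(\frac{d}{dt}\mathcal H_P=2B^2\int_m^M s^2(1+s^2)^{-2}\,ds\). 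After your integration by parts, though, this cancellation runs through the fluxes across the lateral edges \(y=mx\), \(y=Mx\). For non-linear \(G\) those fluxes leave errors of order \(\delta\sigma\mathcal H_P\), again of \(\sigma A\) type.

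Second, your \(\kappa\) bookkeeping is inconsistent. The main term is \(c\kappa B^2\), while \(\kappa\) also appears as a small error factor. Shrinking \(\kappa\) therefore shrinks the main term as fast as the \(\kappa\)-errors and makes the remaining errors relatively larger. Moreover, (ii), (iii) and shear suppression hold only for \(x<\kappa\lambda\), whereas \(\mathcal H_P\) integrates up to \(x=\lambda\). On \(\kappa\lambda<x<\lambda\) neither the sign of the source nor the size of the transport errors is controlled; the paper's source lower bound passes over this as well.
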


\begin{proof}
The differentiated conic mass contains four error classes.  First, the kernel remainder from the side-wall parametrix is bounded by
\[
C\lambda\mathcal H_P[G],
\]
and the Biot--Savart sign plus packet-core coupling converts this into a lower-order multiple of the source scale.  Second, tail and smooth-kernel errors are bounded by admissibility:
\[
\Tail_P[G]+\operatorname{Smooth}_P[G]\le \eta\mathcal H_P[G].
\]
Third, cone-boundary and cutoff commutators are supported where the smooth conic cutoff varies; by choosing the conic packet through a finite smooth partition and using maximality, these terms are either absorbed into an adjacent conic packet or bounded by \(C\delta\mathcal H_P\).  Fourth, the radial-swirl anisotropy terms contain factors of \(\partial_r\Gamma/\partial_z\Gamma\), hence are bounded by \(C\varepsilon\) times the principal source term.

The principal source contribution is
\[
\int_{\mathcal K_\lambda^{m,M}}
\frac{xy}{(x^2+y^2)^2}\partial_y(\Gamma^2)\,dx\,dy.
\]
By source-core nondegeneracy, \(\Gamma\sim b y\) on the inner cone, so \(\partial_y(\Gamma^2)\sim 2b^2y\).  Since \(xy/(x^2+y^2)^2\sim x^{-2}\) and \(y\sim x\) in the cone, this integral is bounded below by
\[
c\lambda b^2=cB^2.
\]
All error terms therefore satisfy \eqref{eq:Err_adm_bound_final}, and choosing the fixed admissibility parameters small gives \eqref{eq:Err_adm_absorb_final}.
\end{proof}

\subsection{Maximal conic score and Dini-derivative closure}

Let \(\mathfrak P(t)\) be the admissible fixed-center conic packet class at time \(t\).  Define
\begin{equation}\label{eq:M_final}
\mathfrak M(t)=\sup_{P\in\mathfrak P(t)}\mathcal H_P[G(t)].
\end{equation}
Sharp packet switching is avoided by working with this maximal score.  Smooth conic cutoffs may be used throughout, so \(\mathfrak M\) is locally Lipschitz as long as the Euler solution is smooth.

\medskip
\noindent\textbf{Dini-derivative maximal conic score inequality.}
\begin{theorem}\label{thm:dini_final}
Assume the exact odd fixed-center admissibility hypotheses hold.  Let \(B(t)=\lambda_*(t)^{1/2}b(t)\), where \(\lambda_*(t)\) is the scale of a near-maximizing admissible packet.  Then
\begin{equation}\label{eq:Dini_final_M}
D^+\mathfrak M(t)
\ge
c_1B(t)^2,
\end{equation}
and
\begin{equation}\label{eq:Dini_final_B}
B'(t)
\ge
c_2\mathfrak M(t)B(t).
\end{equation}
Consequently, if \(\mathfrak M(0)>0\) and \(B(0)>0\), then \(\mathfrak M(t)+B(t)\) blows up in finite comparison time.
\end{theorem}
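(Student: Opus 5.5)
The plan has three parts: differentiate the score of a near-maximizing packet, use the exact point identity for \(b\) together with the Biot--Savart sign bound to control \(B\), and close with a comparison argument for the Dini system.

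\emph{Step 1: lower bound on \(D^+\mathfrak M\).} Fix \(t\) and a near-maximizing admissible packet \(P_*=(\lambda_*,m,M)\in\mathfrak P(t)\) with \(\mathcal H_{P_*}[G(t)]\ge\mathfrak M(t)-\epsilon h\), where \(h>0\) is the increment used in the Dini quotient. Transport \(P_*\) forward by letting its scale follow \eqref{eq:lambda_final} on \([t,t+h]\), keeping the apertures \(m,M\) fixed. Admissibility is an open condition in the smooth-cutoff formulation, so for small \(h\) the transported packet stays in \(\mathfrak P(t+h)\). This gives
\[
\mathfrak M(t+h)-\mathfrak M(t)\ge \mathcal H_{P_*(t+h)}[G(t+h)]-\mathcal H_{P_*(t)}[G(t)]-\epsilon h.
\]
Dividing by \(h\) and letting \(h\to0\), then \(\epsilon\to0\), reduces \eqref{eq:Dini_final_M} to a lower bound on \(\frac{d}{dt}\mathcal H_{P_*}\).

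Compute this derivative using \(D_tG=r^{-4}\partial_z(\Gamma^2)\), integrating the transport terms by parts against the kernel and the moving cutoff. The principal term is the source integral \(\int_{\mathcal K}\frac{xy}{(x^2+y^2)^2}\partial_y(\Gamma^2)\). By source-core nondegeneracy (iii) it is at least \(c_0\lambda_* b^2=c_0B^2\), as computed in the proof of \Cref{lem:admissibility_error_closure}. Three further contributions remain:
\begin{itemize}
\item the transport commutators, where the hyperbolic stretching \(-\sigma y\partial_y\) acts on the homogeneous kernel;
\item the scale change \(\lambda_*'=-\sigma\lambda_*\) at the cone edge \(x=\lambda_*\);
\item the \(r^{-4}\) and kernel-remainder terms.
\end{itemize}
All of these are placed in \(\operatorname{Err}_{\rm adm}\). \Cref{lem:admissibility_error_closure} then bounds them by \(\tfrac12c_0B^2\), giving \(D^+\mathfrak M\ge\tfrac12c_0B^2\).

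\emph{Step 2: the inequality for \(B\).} By \Cref{lem:exact_odd_point_identities}, \(B'=\tfrac12\sigma B\), which is exact. By \eqref{eq:sigma_M_admissible} applied to a near-maximizing packet, \(\sigma\ge c\,\mathcal H_{P_*}[G]\ge c(\mathfrak M-\epsilon h)\). Letting \(\epsilon\to 0\) gives \(B'\ge c_2\mathfrak M B\). One point needs care: \(\lambda_*\) must be a genuinely \(C^1\) choice obeying \eqref{eq:lambda_final}, so \(\lambda_*\) should be defined by that ODE from the initial near-maximizer rather than reselected at each time. If reselection does occur, I would argue that the packet scale class is closed under the compression flow, so the same \(\lambda\) can be kept throughout.

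\emph{Step 3: blow-up of the comparison system.} From \(D^+\mathfrak M\ge c_1B^2\), \(B'\ge c_2\mathfrak MB\) and \(B>0\), compare with the system \(\underline{\mathfrak M}'=c_1\underline B^2\), \(\underline B'=c_2\underline{\mathfrak M}\,\underline B\). Since \(\mathfrak M\) is locally Lipschitz, Dini-derivative comparison applies. As in \Cref{thm:ODE_comparison}, \(\underline B^2-(c_2/c_1)\underline{\mathfrak M}^2\) is conserved, so \(\underline{\mathfrak M}'\ge c\,\underline{\mathfrak M}^2-C\). This blows up in finite time once \(\underline{\mathfrak M}\) is large, and \(\underline{\mathfrak M}\) becomes large because it is increasing and \(\underline B\) grows exponentially. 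Hence \(\mathfrak M+B\) blows up no later than the comparison time.

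The main obstacle is Step 1: controlling the transport commutator \(\int K_0\,(u\cdot\nabla G)\) by \(\operatorname{Err}_{\rm adm}\). This term is of size \(\sigma\mathcal H\sim A^2\), not \(B^2\), so it cannot be absorbed by a small multiple of \(B^2\). My first attempt would use the homogeneity of \(K_0\) of degree \(-2\) in two dimensions. The linear stretching flow \((\sigma x,-\sigma y)\) then contributes an exact divergence-type term, which cancels up to boundary terms on the cone rays. These boundary terms would then be controlled using the linear-core condition (ii), which makes \(G\approx ay\) and gives the cancellation explicitly, leaving \(O(\delta)\sigma\mathcal H\). If the residual cannot be made small relative to \(B^2\), an extra hypothesis \(A\lesssim B\) would be needed. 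That hypothesis is plausibly maintained by the conserved quadratic quantity of Step 3.
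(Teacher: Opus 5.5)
Your three steps follow the paper's proof. It takes a near-maximizer whose scale is transported by \(\lambda'=-\sigma\lambda\), uses the supremum property, and bounds the source integral below by \(c\lambda_*b^2\) using (iii). The remaining terms are absorbed by \Cref{lem:admissibility_error_closure}, then \(B'/B=\sigma/2\) with \(\sigma\ge c\mathfrak M\), and the comparison system finishes. The one substantive divergence is in the bookkeeping your last paragraph worries about.

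You put the moving-edge term produced by \(\lambda_*'=-\sigma\lambda_*\) into \(\operatorname{Err}_{\rm adm}\) alongside the transport term. Neither term is small, but they cancel each other. This is what the paper means by ``the compression-scaling terms cancel in the same way as the identity \(A'=2B^2\)'', and it is the reason for choosing the compression scale. Take \(G\approx ay\) and flattened meridional velocity \((\dot x,\dot y)\approx(\sigma x,-\sigma y)\). The transport term is \(-\int_{\mathcal K}K_0(\sigma x\partial_x-\sigma y\partial_y)G=\sigma a\int_{\mathcal K}K_0\,y=+\sigma\mathcal H_P\). The edge term is \(\lambda_*'\int_{m\lambda_*}^{M\lambda_*}K_0(\lambda_*,y)\,ay\,dy=-\sigma\lambda_*a\int_m^Ms^2(1+s^2)^{-2}\,ds=-\sigma\mathcal H_P\). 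This is exactly how \(-\sigma A\) cancels \(\lambda\sigma a\) in \(A'=\lambda'a+\lambda a'\).

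The homogeneity route you sketch cannot make the transport term small by itself. On a fixed-aperture cone, \(V_\sigma K_0=-4\sigma\frac{x^2-y^2}{x^2+y^2}K_0\) is of full size; it is small only on the narrow diagonal cones of \Cref{lem:diag_kernel_transport}. The fluxes through the rays \(y=mx\), \(y=Mx\) also do not vanish, since the hyperbolic flow does not preserve rays.

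What survives the cancellation is \(O(\delta+\varepsilon+\eta+\lambda_0)\sigma\mathcal H_P\), a small multiple of \(A^2\). You are right that this is not automatically a small multiple of \(B^2\). For this theorem the paper covers it only by citing \Cref{lem:admissibility_error_closure}. That lemma's proof passes from bounds in terms of \(\mathcal H_P\) to bounds in terms of \(B^2\) without any relation between \(A\) and \(B\). In \Cref{thm:narrow_admissibility_absorption} the paper adds your hypothesis explicitly, as the source-dominance bound \eqref{eq:source_dominance_initial} together with \(\sigma\lesssim\mathfrak M\), and assumes it on the whole interval.

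Your idea of propagating \(A\lesssim B\) through the conserved quantity of Step 3 does not work. That quantity belongs to the comparison system, and \(D^+\mathfrak M\ge c_1B^2\) gives no upper bound on \(\mathfrak M\). The exact identities of \Cref{lem:exact_odd_point_identities} do work. From \(A'=2B^2\), \((B^2)'=\sigma B^2\) and \(\sigma\ge c_0A\) (from \eqref{eq:sigma_M_admissible}), one gets \(\frac{d}{dt}(B^2-kA^2)=(\sigma-4kA)B^2\ge0\) for \(k\le c_0/4\). So \(B^2\ge kA^2\) persists once it holds at \(t=0\); to use it you still need \(\sigma+\mathcal H_P\lesssim A\).

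Finally, on reselection, which the paper passes over: if you freeze \(\lambda_*\) along the compression ODE, run Step 1 on the transported packet's own mass \(X(t)=\mathcal H_{P_*(t)}[G(t)]\) rather than on \(\mathfrak M\). A later near-maximizer need not have scale \(\lambda_*(t)\). Then \(X'\ge c_1B^2\), \(B'\ge cXB\) and \(\mathfrak M\ge X\) give the blow-up, provided that one packet stays admissible.
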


\begin{proof}
Fix \(t\) and choose a near-maximizer \(P_t=(\lambda_t,m_t,M_t)\) with
\[
\mathcal H_{P_t}[G(t)]\ge (1-o(1))\mathfrak M(t).
\]
For \(h>0\), evolve only its scale by
\[
\lambda_t'(s)=-\sigma(s)\lambda_t(s),
\qquad
\lambda_t(t)=\lambda_t,
\]
keeping the fixed center and cone aperture.  Since \(\mathfrak M(t+h)\) is a supremum over admissible packets,
\[
\mathfrak M(t+h)\ge \mathcal H_{P_t(h)}[G(t+h)]
\]
up to the harmless near-maximizer loss.  Hence
\[
D^+\mathfrak M(t)
\ge
\left.\frac d{ds}\mathcal H_{P_t(s)}[G(s)]\right|_{s=t}
-
\operatorname{Err}_{\rm adm}(t).
\]
The differentiated source term gives, by \(D_tG=r^{-4}\partial_z(\Gamma^2)\) and the exact odd source-core condition,
\[
\int_{\mathcal K_{\lambda_t}^{m_t,M_t}}
\frac{xy}{(x^2+y^2)^2}\partial_y(\Gamma^2)\,dx\,dy
\ge c\lambda_t b(t)^2=cB(t)^2.
\]
The compression-scaling terms cancel in the same way as the identity \(A'=2B^2\), because \(\lambda'=-\sigma\lambda\).  The remaining commutators and kernel errors are bounded by \Cref{lem:admissibility_error_closure}, yielding \eqref{eq:Dini_final_M}.

For \(B=\lambda_t^{1/2}b\), \Cref{lem:exact_odd_point_identities} gives
\[
\frac{B'}{B}=\frac12\frac{\lambda_t'}{\lambda_t}+\frac{b'}b
=-\frac12\sigma+\sigma=\frac12\sigma.
\]
The side-wall Biot--Savart sign and maximality give \(\sigma\ge c\mathfrak M\), hence \eqref{eq:Dini_final_B}.

The ODE comparison
\[
X'=c_1Y^2,
\qquad
Y'=c_2XY
\]
blows up in finite time: eliminating time gives \(Y^2=(c_2/c_1)X^2+C_0\), so \(X'\ge cX^2\) for large \(X\).  The Dini comparison theorem gives the same finite-time lower blow-up for \(\mathfrak M+B\).
\end{proof}

\subsection{Fixed-center exact-odd blow-up criterion}

\medskip
\noindent\textbf{Fixed-center exact-odd conic blow-up criterion.}
\begin{theorem}\label{thm:fixed_center_exact_odd_criterion}
Let \(u\) be a smooth axisymmetric Euler solution with swirl in the periodic cylinder, satisfying the exact odd class \eqref{eq:exact_odd_class_final}.  Assume there exists an admissible fixed-center conic packet at \(t=0\) with strict maximal conic score and with \(\mathfrak M(0)>0\), \(B(0)>0\).  Assume the side-wall Green-kernel parametrix of \Cref{thm:green_kernel_expansion} and the admissibility error closure \Cref{lem:admissibility_error_closure}.  Then the maximal conic score and normalized swirl-gradient amplitude obey
\[
D^+\mathfrak M(t)\ge cB(t)^2,
\qquad
B'(t)\ge c\mathfrak M(t)B(t),
\]
and therefore blow up in finite comparison time.  Consequently, the smooth Euler solution cannot be continued past that time while retaining the admissible exact-odd conic bootstrap.
\end{theorem}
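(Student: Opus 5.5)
The plan is to reduce \Cref{thm:fixed_center_exact_odd_criterion} to \Cref{thm:dini_final}, so most of the work is checking that that theorem's hypotheses hold on a maximal time interval. The argument then finishes with a comparison and continuation step.

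First, exact oddness \eqref{eq:exact_odd_class_final} is preserved by the Euler flow, as already observed for the source $r^{-4}\partial_z(\Gamma^2)$. Together with the velocity parity \eqref{eq:velocity_parity_final}, this keeps the center $(1,0)$ fixed. Hence \Cref{lem:exact_odd_point_identities} gives $b'=\sigma b$ and $a'=\sigma a+2b^2$ exactly, with no curvature term.

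Second, I define the bootstrap time
\[
T_*=\sup\{T:\ \mathfrak P(t)\neq\emptyset \text{ and the near-maximizer is admissible for } t\in[0,T]\}.
\]
The strict maximality assumed at $t=0$, the local Lipschitz continuity of $\mathfrak M$ for a smooth solution, and the openness of the admissibility conditions (each is a strict inequality in continuous quantities) together give $T_*>0$.

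On $[0,T_*)$, I would apply \Cref{cor:BS_sign} together with the tail domination in \Cref{def:admissible_conic_packet}(iv). Using the kernel expansion of \Cref{thm:green_kernel_expansion} with $\lambda$ small, this yields $\sigma\ge c\,\mathcal H_P\ge c\,\mathfrak M$, up to the near-maximizer loss. Positivity of $\sigma$ then makes $\lambda$ decrease, so the packet remains in the region where the kernel expansion is valid.

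Third, I would run the proof of \Cref{thm:dini_final} verbatim. For a near-maximizer $P_t$, I flow its scale by $\lambda'=-\sigma\lambda$ and compare $\mathfrak M(t+h)$ with $\mathcal H_{P_t(h)}[G(t+h)]$. Differentiating in time, the compression terms cancel as in $A'=2B^2$. The source term gives a contribution $\ge cB^2$ by source-core nondegeneracy, and \Cref{lem:admissibility_error_closure} absorbs the errors into $\tfrac12 c_0B^2$. This gives $D^+\mathfrak M\ge cB^2$. Combining $B'/B=\sigma/2$ with $\sigma\ge c\mathfrak M$ gives $B'\ge c\mathfrak M B$.

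Finally, I compare with the system $X'=c_1Y^2$, $Y'=c_2XY$, using a Dini comparison lemma; the Dini derivative is enough because $\mathfrak M$ is only Lipschitz. Since $X'\ge cX^2$ eventually, the comparison solution blows up at some finite $T_c$, and therefore $\mathfrak M+B\to\infty$ no later than $T_c$ provided $T_*\ge T_c$. To conclude, note that $\mathfrak M\lesssim \lambda\|\nabla G\|_\infty$ and $B\le\|\partial_z\Gamma\|_\infty^{\,}$ (using $\lambda\le\lambda(0)$) are bounded by classical norms as long as the solution stays smooth. So if both the smooth solution and the admissible regime persist up to $T_c$, we reach a contradiction, which is exactly the stated conclusion.

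The main obstacle is the persistence step. \Cref{lem:admissibility_error_closure} is only stated for a packet that is already admissible, so nothing above shows $T_*\ge T_c$, and the sharp-switching issues at near-maximizers are only controlled through this lemma. I would therefore state the conclusion conditionally, as the theorem does: it rules out retaining the admissible bootstrap past $T_c$. I would not try to prove that admissibility actually persists.
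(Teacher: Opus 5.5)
Your proposal follows the paper's route: the statement is the Dini closure of \Cref{thm:dini_final}, read conditionally on persistence of the admissible exact-odd bootstrap, exactly as in the remark the paper places after the theorem. That closure consists of the near-maximizer, the scale flow $\lambda'=-\sigma\lambda$, the source lower bound $cB^2$, error absorption via \Cref{lem:admissibility_error_closure}, $B'/B=\sigma/2$ with $\sigma\ge c\mathfrak M$, and the $X'=c_1Y^2$, $Y'=c_2XY$ comparison.

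Your one addition, the claim $T_*>0$ ``by openness'', is not justified as written. The sign condition $G\ge0$ for $y>0$ is a closed, non-strict condition, and strictness at $t=0$ is assumed only for the maximal score. However, your conditional conclusion never uses $T_*>0$, so this does not affect the argument.
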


\begin{remark}
The criterion is conditional on preservation of the admissible exact-odd conic bootstrap by the smooth Euler evolution.  The Dini derivative argument removes packet switching as a separate obstruction, but it does not supply this persistence theorem.
\end{remark}

\section{Narrow diagonal conic bootstrap and conditional closure package}\label{sec:finalclosure}

The preceding sections reduce the fixed-center exact-odd mechanism to the control of the maximal conic score.  We now strengthen the conic class to a narrow diagonal class and record the conditional closure theorem in a form suitable for referee-level verification.  The point of using narrow diagonal cones is that the leading hyperbolic transport is almost tangent to the level sets of the kernel
\[
K_0(x,y)=\frac{xy}{(x^2+y^2)^2}
\]
when \(y\sim x\).  This is the mechanism that absorbs the cone-boundary and transport errors.

\subsection{Narrow diagonal conic packets}

Fix a small aperture parameter \(0<\delta_c\ll1\).  A narrow diagonal cone is a cone of the form
\begin{equation}\label{eq:narrow_diag_cone}
\mathcal K_\lambda^{\delta_c}
=
\{0<x<\lambda,\ (1-\delta_c)x<y<(1+\delta_c)x\}.
\end{equation}
We use a smooth cutoff \(\chi_{\lambda,\delta_c}\) supported in a slightly enlarged cone and equal to one on a slightly smaller cone.  The corresponding smooth conic mass is
\begin{equation}\label{eq:smooth_diag_mass}
\mathcal H_{\lambda,\delta_c}[G]
=
\int_{x>0,y>0}\chi_{\lambda,\delta_c}(x,y)
K_0(x,y)G(1-x,y,t)\,dx\,dy.
\end{equation}
The maximal narrow diagonal score is
\begin{equation}\label{eq:narrow_diag_score}
\mathfrak M_{\delta_c}(t)
=
\sup_{0<\lambda\le\lambda_0}\mathcal H_{\lambda,\delta_c}[G(t)],
\end{equation}
where \(\lambda_0\) is chosen inside the side-wall parametrix scale.

The associated core quantities are
\begin{equation}\label{eq:diag_core_quantities}
a(t)=\partial_zG(1,0,t),
\qquad
b(t)=\partial_z\Gamma(1,0,t),
\qquad
A(t)=\lambda(t)a(t),
\qquad
B(t)=\lambda(t)^{1/2}b(t).
\end{equation}
In the exact odd class, \(\Gamma(1,0,t)=0\), and therefore the exact point identities are
\begin{equation}\label{eq:exact_odd_diag_identities}
b'(t)=\sigma(t)b(t),
\qquad
a'(t)=\sigma(t)a(t)+2b(t)^2.
\end{equation}

\subsection{Kernel transport cancellation on narrow diagonal cones}

\medskip
\noindent\textbf{Diagonal hyperbolic kernel cancellation.}
\begin{lemma}\label{lem:diag_kernel_transport}
Let
\[
K_0(x,y)=\frac{xy}{(x^2+y^2)^2}
\]
and let the leading hyperbolic vector field be
\[
V_\sigma=\sigma x\partial_x-\sigma y\partial_y.
\]
Then
\begin{equation}\label{eq:diag_kernel_transport_identity}
V_\sigma K_0
=
-4\sigma\frac{x^2-y^2}{x^2+y^2}K_0.
\end{equation}
Consequently, on the narrow diagonal cone \(|y/x-1|\le\delta_c\),
\begin{equation}\label{eq:diag_kernel_transport_small}
|V_\sigma K_0|
\le
C\delta_c\sigma K_0.
\end{equation}
\end{lemma}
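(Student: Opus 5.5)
The identity is a direct computation. Both terms of $V_\sigma$ are Euler-type operators $x\partial_x$ and $y\partial_y$, so I will work with the logarithmic derivative of $K_0$ on the open quadrant $x>0,\ y>0$, where $K_0>0$. Writing $\rho=x^2+y^2$, we have $\log K_0=\log x+\log y-2\log\rho$. Hence
\[
x\partial_x\log K_0=1-\frac{4x^2}{\rho},
\qquad
y\partial_y\log K_0=1-\frac{4y^2}{\rho}.
\]
Subtracting, the constant terms cancel; this cancellation is the reason the hyperbolic field is nearly tangent to the level sets of $K_0$. It gives
\[
(x\partial_x-y\partial_y)\log K_0=-4\frac{x^2-y^2}{x^2+y^2}.
\]
Multiplying by $\sigma K_0$ yields \eqref{eq:diag_kernel_transport_identity} on the open quadrant.

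The identity extends to the rest of the domain $x\neq0$, $(x,y)\neq(0,0)$. I will either repeat the computation directly with the product rule on the rational function $xy\rho^{-2}$, or note that both sides are real-analytic there and agree on an open set. The direct product-rule check is short: $x\partial_x K_0=K_0-4x^2\rho^{-1}K_0$ and $y\partial_y K_0=K_0-4y^2\rho^{-1}K_0$.

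For the cone estimate \eqref{eq:diag_kernel_transport_small}, I parametrize the narrow diagonal cone by $y=tx$ with $|t-1|\le\delta_c$. Then
\[
\frac{x^2-y^2}{x^2+y^2}=\frac{(1-t)(1+t)}{1+t^2}.
\]
For $\delta_c\le1$ we have $|1+t|\le 3$ and $1+t^2\ge1$, so this quantity is bounded in absolute value by $3\delta_c$. Since $K_0>0$ on the cone, taking absolute values in the identity gives $|V_\sigma K_0|\le 12\,\delta_c|\sigma|K_0$, which is \eqref{eq:diag_kernel_transport_small} with $C=12$ (with $\sigma$ read as $|\sigma|$; in the intended regime $\sigma>0$ by \Cref{cor:BS_sign}).

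I do not expect a genuine obstacle here. The only points to watch are the sign convention in $V_\sigma$ and the use of positivity of $K_0$ on the cone. Positivity matters because the bound is meant to compare $V_\sigma K_0$ to $K_0$ itself, and that comparison is what later absorbs the transport error into the conic mass. I would also remark that the bound is uniform in the scale $\lambda$, since the identity is homogeneous of degree $0$ relative to $K_0$.
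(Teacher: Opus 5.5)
Your proof is correct and follows the paper's argument: take the logarithmic derivative of $K_0$ under $x\partial_x-y\partial_y$ to get the identity, then note that $|x^2-y^2|/(x^2+y^2)=O(\delta_c)$ on the narrow diagonal cone. Your extra details (the explicit constant $C=12$, the product-rule extension off the open quadrant, and reading $\sigma$ as $|\sigma|$) are harmless refinements of the same computation.
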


\begin{proof}
Compute logarithmically:
\[
\log K_0=\log x+\log y-2\log(x^2+y^2).
\]
Hence
\[
x\partial_x\log K_0=1-\frac{4x^2}{x^2+y^2},
\qquad
 y\partial_y\log K_0=1-\frac{4y^2}{x^2+y^2}.
\]
Therefore
\[
(x\partial_x-y\partial_y)\log K_0
=-4\frac{x^2-y^2}{x^2+y^2},
\]
which gives \eqref{eq:diag_kernel_transport_identity}.  If \(|y/x-1|\le\delta_c\), then \(|x^2-y^2|/(x^2+y^2)\le C\delta_c\), giving \eqref{eq:diag_kernel_transport_small}.
\end{proof}

\subsection{Source-core lower bound}

\medskip
\noindent\textbf{Narrow diagonal source lower bound.}
\begin{lemma}\label{lem:diag_source_lower}
Assume the exact odd source-core nondegeneracy condition
\begin{equation}\label{eq:source_core_nondeg}
(1-C\varepsilon)b(t)y
\le
\Gamma(1-x,y,t)
\le
(1+C\varepsilon)b(t)y
\end{equation}
holds on the inner narrow diagonal cone.  Then
\begin{equation}\label{eq:diag_source_lower}
\int \chi_{\lambda,\delta_c}K_0\partial_y(\Gamma^2)\,dx\,dy
\ge
c\lambda b(t)^2
=cB(t)^2,
\end{equation}
provided \(\varepsilon\) and \(\delta_c\) are sufficiently small.
\end{lemma}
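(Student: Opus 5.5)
The plan is to compute the integrand pointwise on the cone where $\chi_{\lambda,\delta_c}$ is supported and then integrate in polar-type coordinates adapted to the diagonal. The main difficulty is that the hypothesis \eqref{eq:source_core_nondeg} bounds $\Gamma$ from both sides but says nothing directly about $\partial_y\Gamma$. So I would avoid differentiating the two-sided bound and instead integrate by parts in $y$ to move the derivative onto the kernel and the cutoff:
\[
\int \chi_{\lambda,\delta_c}K_0\,\partial_y(\Gamma^2)\,dx\,dy
=-\int \partial_y\bigl(\chi_{\lambda,\delta_c}K_0\bigr)\,\Gamma^2\,dx\,dy.
\]
There are no boundary terms, because $\chi_{\lambda,\delta_c}$ is compactly supported away from $y=0$ and from $x=0$ up to the scale cut at $x\sim\lambda$. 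The scale cut also needs smoothing; otherwise a boundary term at $x=\lambda$ appears, but it has a fixed sign structure and can be handled alongside.

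\textbf{Step 1 (kernel derivative on the diagonal).} From the logarithmic computation in \Cref{lem:diag_kernel_transport},
\[
\partial_yK_0=\frac{K_0}{y}\Bigl(1-\frac{4y^2}{x^2+y^2}\Bigr),
\]
and on $|y/x-1|\le\delta_c$ this equals $-\frac{K_0}{y}\,(1+O(\delta_c))$. So $-\partial_yK_0\,\Gamma^2$ is positive, and by \eqref{eq:source_core_nondeg} it is comparable to $(1\pm C\varepsilon)^2 b^2 y K_0\sim b^2x^{-2}\cdot x$. This is exactly the size of the naive integrand $2b^2yK_0$. One can also check the bookkeeping: since $\int K_0\,\partial_y(y^2)=\int 2yK_0$ and $-\int \partial_yK_0\,y^2\approx\int yK_0$, the Step 1 main term alone gives half the constant. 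The other half must come from the cutoff term, so it cannot simply be discarded.

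\textbf{Step 2 (cutoff derivative, the main obstacle).} The term $-\int K_0\,\partial_y\chi_{\lambda,\delta_c}\,\Gamma^2$ has $\partial_y\chi$ of size $(\delta_cx)^{-1}$, supported on two thin angular layers of width about $\delta_c x$. On the lower edge ($y\approx(1-\delta_c)x$) the derivative is positive, and on the upper edge it is negative. Replacing $\Gamma^2$ by $b^2y^2$ up to a relative error $C\varepsilon$, the exact model contribution is
\[
-\int K_0\,\partial_y\chi\,b^2y^2 = b^2\int \chi\,\partial_y(K_0y^2)\,dx\,dy + \text{(term already counted)},
\]
which is positive and of order $\delta_c\lambda b^2$. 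The error from the $\varepsilon$-perturbation is at most $C\varepsilon\, b^2\int|\partial_y\chi|K_0y^2\lesssim \varepsilon\lambda b^2$.

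This error is not small relative to the Step 1 main term, which is about $\delta_c\lambda b^2$ since the cone has angular width $\delta_c$. It is only small if $\varepsilon\ll\delta_c$. I would therefore fix the order of parameters as $\varepsilon\ll\delta_c\ll1$, which matches the statement's proviso that both are sufficiently small.

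\textbf{Step 3 (integration).} In coordinates $y=sx$ with $s\in(1-\delta_c,1+\delta_c)$ we have $K_0\,y\,dy\sim x^{-2}\cdot x\cdot x\,ds$. Integrating over $x\in(0,\lambda)$ gives a lower bound $c\,\delta_c\lambda b^2$ with $c$ depending on the fixed $\delta_c$. This yields $c\lambda b^2=cB^2$ by the definition of $B$, with the constant absorbing $\delta_c$.

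An alternative that bypasses the integration by parts is to read the source-core hypothesis as $C^1$-closeness, $\partial_y\Gamma=b(1+O(\varepsilon))$, consistent with admissibility (iii) in the smooth class. Then $\partial_y(\Gamma^2)=2\Gamma\partial_y\Gamma\ge 2(1-C\varepsilon)^2b^2y$ pointwise, and Step 3 applies directly. I would present this version if the paper's nondegeneracy is understood in that sense.
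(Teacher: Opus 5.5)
Your proof is correct, and it follows a genuinely different route from the paper's.

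\textbf{The paper's route.} The paper argues pointwise. It reads off $\partial_y(\Gamma^2)=2\Gamma\Gamma_y\ge c\,b^2y$ from the two-sided bound, multiplies by $K_0\sim x^{-2}$, and integrates over slices of width $\sim\delta_c x$ on the region where $\chi\equiv1$. It absorbs $\delta_c$ into $c$, so its $c$ is also $\sim\delta_c$. That step silently uses derivative control, $\Gamma_y=b(1+O(\varepsilon))$, which a $C^0$ sandwich on $\Gamma$ does not give. Your closing alternative is exactly the paper's argument under that $C^1$ reading.

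\textbf{Your route.} Your integration by parts in $y$ is what makes the lemma true with the hypothesis as literally displayed, and your bookkeeping is right. The kernel term and the cutoff term each supply about half of $b^2\int\chi\,2yK_0\sim\delta_c\lambda b^2$. The dominant $\varepsilon$-error sits in the cutoff term and is $O(\varepsilon\lambda b^2)$, because $\int|\partial_y\chi|\,dy=O(1)$ on each slice independently of the aperture.

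\textbf{The ordering is necessary.} The ordering $\varepsilon\ll\delta_c$ you impose is genuinely needed under the $C^0$ reading, not an artifact of your method. Put $\Gamma^2$ at the top of the sandwich on the lower transition layer and at the bottom on the upper one. This makes the cutoff term about $-c\,\varepsilon\lambda b^2$, which defeats the $\delta_c\lambda b^2$ main term once $\varepsilon\gg\delta_c$. The $C^1$ route needs no such ordering, but it requires a stronger hypothesis than the one displayed.

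\textbf{Minor points.} Two slips are harmless. First, integrating by parts in $y$ at fixed $x$ produces no boundary term at $x=\lambda$, since for each $x$ the angular cutoff is compactly supported in $y$. Second, in Step 2 one has exactly $-\int K_0\,\partial_y\chi\,y^2=\int\chi\,\partial_y(K_0y^2)$, with no additional ``already counted'' term.

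You should also say explicitly that your route uses the sandwich on all of $\operatorname{supp}\chi$, in particular on the transition layers where $\partial_y\chi\ne0$. The paper likewise needs sign information there in order to discard the region where $0<\chi<1$.
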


\begin{proof}
On the narrow diagonal cone, \(y\sim x\), so \(K_0\sim x^{-2}\).  From \eqref{eq:source_core_nondeg},
\[
\partial_y(\Gamma^2)=2\Gamma\Gamma_y
\ge c b(t)^2y.
\]
Thus the integrand satisfies
\[
K_0\partial_y(\Gamma^2)\ge c x^{-2}b(t)^2x=c b(t)^2x^{-1}.
\]
The width of the diagonal cone at height \(x\) is comparable to \(\delta_c x\), and the cutoff is identically one on a fixed smaller diagonal cone.  Therefore the inner cone contribution is bounded below by
\[
 c b(t)^2\int_0^\lambda dx=c\lambda b(t)^2=cB(t)^2.
\]
\end{proof}

\subsection{Admissibility error absorption in the narrow diagonal class}

\medskip
\noindent\textbf{Narrow diagonal admissibility-error absorption.}
\begin{theorem}\label{thm:narrow_admissibility_absorption}
Assume the side-wall parametrix estimates, exact odd symmetry, tail dominance, inner-core anisotropy, source-core nondegeneracy, and the quantitative source dominance
\begin{equation}\label{eq:source_dominance_initial}
B(t)^2\ge C_*\mathfrak M_{\delta_c}(t)^2
\end{equation}
throughout the bootstrap interval.  Then, as long as the remaining bootstrap quantities stay within the admissible region,
\begin{equation}\label{eq:narrow_error_absorption}
\operatorname{Err}_{\rm adm}(t)
\le
c_0 B(t)^2,
\end{equation}
where \(c_0>0\) can be made smaller than the source constant in \Cref{lem:diag_source_lower} by choosing
\[
\delta_c,\varepsilon,\eta,\lambda_0>0
\]
sufficiently small.
\end{theorem}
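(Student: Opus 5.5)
The plan is to write out the time derivative of the smooth conic mass $\mathcal H_{\lambda(s),\delta_c}[G(s)]$ along the compression scale $\lambda'=-\sigma\lambda$, term by term, and to bound every piece that is not the principal source integral of \Cref{lem:diag_source_lower} by a small multiple of either $\sigma\mathfrak M_{\delta_c}$ or $B^2$.

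\emph{Decomposition.} Using $D_tG=r^{-4}\partial_z(\Gamma^2)$, the derivative splits into four parts:
\begin{enumerate}[label=(\alph*)]
\item the principal source integral;
\item the $r^{-4}-1=O(\lambda_0)$ weight correction;
\item the transport term $-\int\chi K_0\,(u\cdot\nabla)G$, which after integration by parts becomes $\int G\,\nabla\cdot(\chi K_0 u)$;
\item the scale-variation term $\partial_\lambda\chi_{\lambda,\delta_c}\,\lambda'$.
\end{enumerate}
In the transport term we write the meridional velocity near $(1,0)$ as the leading hyperbolic field $V_\sigma=\sigma x\partial_x-\sigma y\partial_y$ plus a remainder. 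The remainder is controlled by the parametrix of \Cref{thm:green_kernel_expansion}, the shear suppression of \Cref{thm:shear_suppression}, tail dominance, and the anisotropy bound. All of these make it of size $(\kappa+\eta+\lambda_0+\varepsilon)\sigma\cdot|(x,y)|$ in the inner cone.

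\emph{The $V_\sigma$ piece.} Since $V_\sigma$ is divergence free in $(x,y)$, we need $V_\sigma(\chi K_0)=\chi V_\sigma K_0+K_0V_\sigma\chi$.
\begin{itemize}
\item The first term is bounded by $C\delta_c\sigma\chi K_0$ by \Cref{lem:diag_kernel_transport}.
\item For the second term we choose the cutoff to be homogeneous of degree zero in the angular variable $y/x$ times a radial factor in $x$. The angular factor is then invariant under $V_\sigma$ only to order $\delta_c$ near the diagonal, since $V_\sigma(y/x)=-2\sigma y/x$. The radial factor produces $\sigma x\partial_x\chi$, which cancels exactly against the scale-variation term (d) because $\lambda'=-\sigma\lambda$; this is the analogue of the cancellation behind $A'=2B^2$.
\end{itemize}
The main obstacle is the angular part. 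Because $y/x$ is not preserved by $V_\sigma$, the angular cutoff derivative is $O(\sigma)$ rather than $O(\delta_c\sigma)$ on the transition layer, and the layer width is comparable to $\delta_c x$. I would first try to show that the transition-layer contribution is $O(\sigma\cdot\text{layer mass})$, with the layer mass bounded by $C\mathfrak M_{\delta_c}$ through linear-core nondegeneracy $G\approx ay$. The net bound would then be $C\sigma\mathfrak M_{\delta_c}$ with a constant that is not small. If that fails, I would make the angular layer a fixed small fraction of the cone, so that its mass is $\le C\delta_c'\mathcal H$.

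\emph{Closing.} All errors are now bounded by $C(\delta_c+\varepsilon+\eta+\kappa+\lambda_0)\,\sigma\mathfrak M_{\delta_c}$ plus $C(\varepsilon+\lambda_0)B^2$. For the source-type errors we use source-core nondegeneracy, as in \Cref{lem:admissibility_error_closure}. To convert the $\sigma\mathfrak M_{\delta_c}$ errors into $B^2$ we need an upper bound $\sigma\le C\mathfrak M_{\delta_c}$, obtained from the Green expansion together with tail dominance. That gives errors of size $C(\cdots)\mathfrak M_{\delta_c}^2$, and \eqref{eq:source_dominance_initial} yields $\mathfrak M_{\delta_c}^2\le C_*^{-1}B^2$. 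Choosing $\delta_c,\varepsilon,\eta,\lambda_0$ small (and $C_*$ large) then gives $\operatorname{Err}_{\rm adm}\le c_0B^2$ with $c_0$ below the constant of \Cref{lem:diag_source_lower}.

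The expected weak point remains the non-small angular-cutoff commutator. The proposed way to absorb it is through \eqref{eq:source_dominance_initial} with $C_*$ large, rather than through smallness of $\delta_c$.
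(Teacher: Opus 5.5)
Your skeleton matches the paper's: you split off the principal source, bound the kernel, tail and anisotropy errors, write the velocity as \(V_\sigma\) plus a remainder, and then use \(\sigma\lesssim\mathfrak M_{\delta_c}\) with source dominance to turn \(\sigma\mathfrak M_{\delta_c}\) into \(B^2\). You are also right that \(V_\sigma\) acting on the cutoff is not covered by \Cref{lem:diag_kernel_transport}; the paper's proof of this theorem applies that lemma only to \(V_\sigma K_0\) and never isolates the cutoff term. Your handling of that term is wrong in two places.

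\emph{Sign error.} Take \(\chi=\psi(\theta)\varphi(x/\lambda)\) with \(\theta=y/x\) and \(\lambda'=-\sigma\lambda\). Then \(V_\sigma[\varphi(x/\lambda)]=\sigma\tfrac{x}{\lambda}\varphi'(x/\lambda)=\partial_t[\varphi(x/\lambda(t))]\). The hyperbolic field moves material away from the wall (\(\dot x=\sigma x\)) while the window shrinks. So the radial transport term and the scale-variation term are the same integral, each \(\approx-\sigma\mathcal H_{\lambda,\delta_c}\), and they add rather than cancel.

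\emph{Size of the angular commutator.} \(|V_\sigma\psi|=2\sigma\theta|\psi'(\theta)|\) is of size \(\sigma/\tau\) on a transition layer of angular width \(\tau\). Since \(\int|\psi'|\,d\theta=2\), its absolute contribution is \(\sigma\) times the angular density of \(K_0G\) at the cone edges, about \(\sigma\mathcal H_{\lambda,\delta_c}/\delta_c\), whatever \(\tau\) is. Thinning the layer, your fallback, therefore gains nothing. Only the signed cancellation between the two edges (integration by parts in \(\theta\)) brings it down, to \(\approx+2\sigma\mathcal H_{\lambda,\delta_c}\).

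The correct bookkeeping is a cancellation among all the non-source terms. For \(G=a(t)y\) moved by the exact field \(V_\sigma\), the degree \(-1\) homogeneity of \(K_0y\) gives \(\mathcal H_{\lambda,\delta_c}=c_{\psi,\varphi}\,a\lambda\). Hence \(\chi V_\sigma K_0\), the angular commutator, the radial transport and the scale variation sum to exactly zero. This is the integrated form of \(A'=2B^2\) that the proof of \Cref{thm:dini_final} appeals to.

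What remains is driven by two sources: the core defect \(G-ay\), and the velocity remainder acting on \(\nabla\chi\). Both inherit the \(1/\delta_c\) amplification above, so they are small only if those defects are small compared with \(\delta_c\). Your bound \(C(\delta_c+\varepsilon+\eta+\kappa+\lambda_0)\sigma\mathfrak M_{\delta_c}\) does not capture this.

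Your closing instead absorbs a non-small multiple of \(\sigma\mathfrak M_{\delta_c}\) by taking \(C_*\) large, which proves a different statement. In the theorem, \eqref{eq:source_dominance_initial} is a hypothesis with a given constant, and \(c_0\) is claimed to become small by shrinking \(\delta_c,\varepsilon,\eta,\lambda_0\) alone. Moreover, the constant in \Cref{lem:diag_source_lower} is itself proportional to \(\delta_c\), because the cone has width \(\delta_c x\). Any \(C_*\)-based absorption would then force \(C_*\to\infty\) as \(\delta_c\to0\). As written, the proposal does not establish the theorem.
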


\begin{proof}
The admissibility error is decomposed as
\[
\operatorname{Err}_{\rm adm}=E_{\rm kernel}+E_{\rm tail}+E_{\rm smooth}+E_{\rm cone}+E_{\rm aniso}+E_{\rm trans}.
\]
The parametrix gives
\[
E_{\rm kernel}\le C\lambda_0\mathcal H_{\lambda,\delta_c},
\]
which is a small fraction of the compression term after \(\lambda_0\) is chosen below the parametrix scale.  The tail and smooth terms are controlled by the bootstrap dominance:
\[
E_{\rm tail}+E_{\rm smooth}\le\eta\mathcal H_{\lambda,\delta_c}.
\]
The radial-swirl anisotropy gives
\[
E_{\rm aniso}\le C\varepsilon B^2.
\]
For cone and transport errors, split the velocity gradient into the leading hyperbolic part plus the strain defect.  The leading part acting on the kernel is controlled by \Cref{lem:diag_kernel_transport} and contributes at most
\[
C\delta_c\sigma\mathcal H_{\lambda,\delta_c}.
\]
The strain-defect contribution is bounded by shear suppression and strain-variation estimates from the same side-wall parametrix and is at most
\[
C(\varepsilon+\eta+\lambda_0)\sigma\mathcal H_{\lambda,\delta_c}.
\]
The Biot--Savart sign and packet--core coupling give
\(\sigma\lesssim \mathfrak M_{\delta_c}\) on the admissible class.  The assumed
source-dominance bound \eqref{eq:source_dominance_initial} therefore gives
\[
\sigma\mathfrak M_{\delta_c}\lesssim \mathfrak M_{\delta_c}^2\lesssim B^2.
\]
Hence the cone and transport errors are bounded by
\[
C(\delta_c+\varepsilon+\eta+\lambda_0)B^2.
\]
Choosing the admissibility constants small gives \eqref{eq:narrow_error_absorption}.
\end{proof}

\subsection{Fixed-center narrow diagonal maximal-score closure}

\medskip
\noindent\textbf{Fixed-center narrow diagonal maximal-score closure.}
\begin{theorem}\label{thm:narrow_diag_closure}
Assume exact odd symmetry, the side-wall Green-kernel parametrix, narrow diagonal tail dominance, source-core nondegeneracy, anisotropy persistence, and the source-dominance bound \eqref{eq:source_dominance_initial} throughout the common bootstrap interval.  Then
\begin{equation}\label{eq:narrow_Dini_closure}
D^+\mathfrak M_{\delta_c}(t)
\ge
c_1B(t)^2,
\qquad
B'(t)
\ge
c_2\mathfrak M_{\delta_c}(t)B(t).
\end{equation}
Consequently \(\mathfrak M_{\delta_c}(t)+B(t)\) blows up in finite comparison time if both quantities are initially positive.
\end{theorem}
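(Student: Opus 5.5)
The plan follows the template of \Cref{thm:dini_final}, now in the narrow diagonal class with the smooth cutoff $\chi_{\lambda,\delta_c}$. It has three parts: a Dini lower bound for $\mathfrak M_{\delta_c}$, a logarithmic derivative for $B$, and an ODE comparison.

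\emph{Step 1: the Dini bound for $\mathfrak M_{\delta_c}$.} Fix $t$ in the bootstrap interval and choose a near-maximizing scale $\lambda_t\le\lambda_0$ with $\mathcal H_{\lambda_t,\delta_c}[G(t)]\ge(1-\theta)\mathfrak M_{\delta_c}(t)$, where $\theta$ is arbitrary. Evolve only this scale by $\lambda_t'(s)=-\sigma(s)\lambda_t(s)$, keeping the center and the aperture $\delta_c$ fixed. Since $\lambda_t(s)$ decreases, it stays below $\lambda_0$ and remains an admissible competitor in the supremum \eqref{eq:narrow_diag_score}. Hence
\[
\mathfrak M_{\delta_c}(t+h)\ge\mathcal H_{\lambda_t(t+h),\delta_c}[G(t+h)],
\]
and $D^+\mathfrak M_{\delta_c}(t)$ is bounded below by the right derivative at $s=t$, up to the loss from $\theta$.

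\emph{Step 2: differentiating the packet functional.} I would write $\partial_tG=-(u^r\partial_r+u^z\partial_z)G+r^{-4}\partial_z(\Gamma^2)$ and integrate the transport term by parts onto $\chi_{\lambda,\delta_c}K_0$. The leading part of the velocity near $(1,0)$ is $V_\sigma$. Its action on $K_0$ is $O(\delta_c\sigma K_0)$ by \Cref{lem:diag_kernel_transport}. Its action on the radial profile of the cutoff cancels against $\partial_\lambda\chi\cdot\lambda'$, because $\lambda'=-\sigma\lambda$; this is the same cancellation as in $A'=2B^2$ in \Cref{lem:exact_odd_point_identities}. The remaining pieces are $E_{\rm cone}$, $E_{\rm trans}$, the strain defect, the kernel remainder, and the tail, smooth, and anisotropy contributions. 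Together these are exactly $\operatorname{Err}_{\rm adm}$. The source term, with $r^{-4}=1+O(\lambda_0)$, is bounded below by $cB^2$ via \Cref{lem:diag_source_lower}. \Cref{thm:narrow_admissibility_absorption} absorbs $\operatorname{Err}_{\rm adm}\le c_0B^2$ with $c_0<c$, which gives $D^+\mathfrak M_{\delta_c}\ge c_1B^2$ after letting $\theta\to0$.

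\emph{Step 3: the bound for $B$ and the comparison.} Using $b'=\sigma b$ from \eqref{eq:exact_odd_diag_identities}, we get $B'/B=\tfrac12\sigma$. Applying \Cref{cor:BS_sign} on the near-maximizing packet, together with tail dominance and $\lambda_0$ small, gives $\sigma\ge c\,\mathcal H_{\lambda_t,\delta_c}\ge c'\mathfrak M_{\delta_c}$, so $B'\ge c_2\mathfrak M_{\delta_c}B$. Blow-up then follows as in \Cref{thm:dini_final}. The comparison system $X'=c_1Y^2$, $Y'=c_2XY$ has the invariant $Y^2=(c_2/c_1)X^2+C$, so $X'\gtrsim X^2$ eventually. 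A standard Dini comparison lemma, valid because $\mathfrak M_{\delta_c}$ is locally Lipschitz for smooth solutions, transfers this finite-time blow-up to $\mathfrak M_{\delta_c}+B$.

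\emph{Main obstacle.} I expect the delicate point to be Step 2: the scale-derivative/transport cancellation under the smooth cutoff. Its angular profile is not exactly invariant under $V_\sigma$, and the flow pushes mass across the diagonal cone boundaries. I would first try to bound the angular cutoff commutator by $C\sigma\mathcal H$ restricted to the cone transition layer. That bound should then be converted to $B^2$ via the source-dominance hypothesis \eqref{eq:source_dominance_initial}, relying on $\sigma\lesssim\mathfrak M_{\delta_c}$ exactly as in \Cref{thm:narrow_admissibility_absorption}. A second point to check is that a single $\lambda_t$ serves both inequalities, so that $B$ is consistently defined along the near-maximizer; I would address this by defining $B$ with the evolved scale of the chosen packet.
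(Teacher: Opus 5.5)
Your outline matches the paper's proof step for step: near-maximizer, scale flow $\lambda'=-\sigma\lambda$, Lemma~\ref{lem:diag_source_lower}, absorption by Theorem~\ref{thm:narrow_admissibility_absorption}, $B'/B=\sigma/2$ with the Biot--Savart sign, and the ODE comparison. The gap is in the one place where you go beyond citation, the cutoff terms of Step~2, and your explicit claim there is false.

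Take a cutoff adapted to the paper's cone $\{0<x<\lambda\}$, say $\chi=\psi(x/\lambda)\,\omega(\tau)$ with $\tau=y/x$ and $\psi'\le0$. After integrating the transport term by parts ($\operatorname{div}V_\sigma=0$), the leading non-source terms are $\int GK_0(\lambda'\partial_\lambda\chi+V_\sigma\chi)+\int G\chi\,V_\sigma K_0$, with
\[
\lambda'\partial_\lambda\chi=\sigma\frac{x}{\lambda}\psi'\Bigl(\frac{x}{\lambda}\Bigr)\omega(\tau),
\qquad
V_\sigma\chi=\sigma\frac{x}{\lambda}\psi'\Bigl(\frac{x}{\lambda}\Bigr)\omega(\tau)-2\sigma\tau\,\psi\Bigl(\frac{x}{\lambda}\Bigr)\omega'(\tau).
\]
The radial pieces add rather than cancel. $V_\sigma$ carries fluid away from the wall ($\dot x=\sigma x$) while the packet shrinks, so both expel mass across $x\approx\lambda$. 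For $G=ay$ they give exactly $-2\sigma\mathcal H$. Your cancellation would hold for an axial profile $\psi(y/\lambda)$, since $V_\sigma y=-\sigma y$, but not for the paper's $x$-profile. The balancing term is the angular commutator, which comes from $V_\sigma\tau=-2\sigma\tau$: the flow rotates rays toward the wall normal. For $G=ay$ the angular and kernel terms sum to exactly $+2\sigma\mathcal H$, and the total vanishes. That is the real ``$A'=2B^2$ cancellation''. It comes from the degree-one homogeneity of $\lambda\mapsto\mathcal H_\lambda[ay]$ together with $-V_\sigma\cdot\nabla(ay)=\sigma ay$.

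Hence your plan for the main obstacle cannot work, whichever radial profile you use. The angular commutator is the difference of an inflow across $\tau\approx1+\delta_c$ and an outflow across $\tau\approx1-\delta_c$. Each is of size about $\sigma\mathcal H/\delta_c$, because the flow sweeps through the cone in time $O(\delta_c/\sigma)$. A transition-layer bound in absolute value therefore gives $C\sigma\mathcal H/\delta_c$, not $C\sigma\mathcal H$. Narrowing the cone helps $V_\sigma K_0$ (Lemma~\ref{lem:diag_kernel_transport}) but hurts $V_\sigma\chi$. Routing this through $\sigma\lesssim\mathfrak M_{\delta_c}$ and \eqref{eq:source_dominance_initial} would need $C_*$ to grow as $\delta_c\to0$. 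Theorem~\ref{thm:narrow_admissibility_absorption}, by contrast, gets its smallness only from $\delta_c,\varepsilon,\eta,\lambda_0$.

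The missing idea is to keep the scale, radial, angular and kernel terms together as one weight that annihilates the profile $y$, so that only $G-ay$ contributes. This needs a quantitative linear core $G=ay(1+O(\delta))$ on the cone with $\delta\ll\delta_c$, and the deviation is then $O((\delta/\delta_c)\sigma\mathcal H)$. That is admissibility~(ii) of Definition~\ref{def:admissible_conic_packet}. It is not among the hypotheses you invoke, since the listed core nondegeneracy concerns $\Gamma$. Citing Theorem~\ref{thm:narrow_admissibility_absorption} does not supply it either: its proof applies Lemma~\ref{lem:diag_kernel_transport} to $K_0$ only, never to the cutoff or the scale derivative.

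A smaller point in Step~1: a fixed near-maximizer loss $\theta\mathcal H$ does not survive division by $h$, so letting $\theta\to0$ afterwards is the wrong order of limits. Use an exact maximizer instead. It exists when $\mathfrak M_{\delta_c}(t)>0$, because $\lambda\mapsto\mathcal H_{\lambda,\delta_c}[G(t)]$ is continuous and $O(\lambda)$ as $\lambda\to0$.
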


\begin{proof}
Choose a smooth narrow diagonal packet whose mass is within \(o(1)\) of \(\mathfrak M_{\delta_c}(t)\).  In the Dini derivative comparison, evolve its axial scale by \(\lambda'=-\sigma\lambda\).  The supremum property gives a lower bound for \(D^+\mathfrak M_{\delta_c}\) by the derivative of this transported packet mass.  The principal source contribution is bounded below by \Cref{lem:diag_source_lower}.  All remaining terms are absorbed by \Cref{thm:narrow_admissibility_absorption}.  This gives the first inequality in \eqref{eq:narrow_Dini_closure}.

For the second inequality, the exact odd point identity gives \(b'=\sigma b\), while \(B=\lambda^{1/2}b\) and \(\lambda'=-\sigma\lambda\) imply
\[
\frac{B'}B=\frac12\sigma.
\]
The side-wall Biot--Savart sign and maximal conic mass give \(\sigma\ge c\mathfrak M_{\delta_c}\).  Hence \(B'\ge c\mathfrak M_{\delta_c}B\).  The finite-time blow-up follows from the comparison system \(X'=c_1Y^2\), \(Y'=c_2XY\).
\end{proof}

\subsection{Smooth exact-odd initial data and continuation-norm blow-up}

\medskip
\noindent\textbf{Smooth exact-odd conic initial data.}
\begin{proposition}\label{prop:smooth_odd_initial_data}
There exist smooth axisymmetric initial data with swirl satisfying the impermeable boundary condition, exact odd symmetry, strict narrow diagonal conic dominance, positive upper-cone sign, source dominance, and small radial-swirl anisotropy.
\end{proposition}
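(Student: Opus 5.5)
The plan is to write the data down explicitly in the variables $(\Gamma_0,G_0)$ and then recover the velocity. Fix a small scale $\rho\ll\lambda_0$ and an odd, $2\pi$-periodic profile $s(z)$ with $s(z)=z$ for $|z|\le 2\rho$ and $s\ge0$ on $(0,\pi)$. Fix a radial cutoff $\chi(r)$ with $\chi\equiv1$ for $r\ge 1-2\rho$ and $\chi\equiv0$ for $r\le 1/2$. Set
\[
\Gamma_0(r,z)=b_0\,\chi(r)\,s(z),\qquad G_0(r,z)=a_0\,\chi(r)\,\theta(r,z)\,s(z),
\]
where $\theta$ is a smooth bump, even in $z$, equal to one on $\{x+|y|\le\rho\}$ and supported in $\{x+|y|\le2\rho\}$.

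Because both fields vanish near the axis, $u^\theta_0=\Gamma_0/r$ and $\omega^\theta_0=rG_0$ are smooth axisymmetric fields. We then solve $-\Delta_5\phi_0=G_0$ with $\phi_0|_{r=1}=0$ and define $u^r_0,u^z_0$ by \eqref{eq:stream_phi}. Divergence-freeness and the impermeable condition $u^r_0(1,z)=-\partial_z\phi_0(1,z)=0$ then hold automatically. Elliptic regularity for $\Delta_5$ (which is the 5D Laplacian on radial functions) gives smoothness. The uniqueness of the Dirichlet solution, together with the oddness of $G_0$, gives the velocity parity \eqref{eq:velocity_parity_final}, so the exact odd class \eqref{eq:exact_odd_class_final} holds.

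Next I verify the pointwise conditions.

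1. Near the corner, $\Gamma_0=b_0y$ and $G_0=a_0y$ hold \emph{exactly} on $\{x+|y|\le\rho\}$. This gives linear-core and source-core nondegeneracy with $\delta=0$ in any inner cone with $\kappa\lambda\le\rho/2$, and it gives $a(0)=a_0$ and $b(0)=b_0$.
2. Since $\chi\equiv1$ near the wall, $\partial_r\Gamma_0=0$ in the core, so the anisotropy ratio $R$ vanishes there.
3. Positivity of $G_0$ for $y>0$ follows from $s\ge0$ on $(0,\pi)$ and $\theta,\chi\ge0$.

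For the score, $\mathcal H_{\lambda,\delta_c}[G_0]$ is nondecreasing in $\lambda$ because $G_0\ge0$ on the upper cone. It equals $c\,\delta_c\,\lambda a_0$ for $\lambda\le\rho$ and saturates afterward, so $\mathfrak M_{\delta_c}(0)\sim\delta_c\rho a_0$ is attained at $\lambda\approx\rho$. Strictness follows by inserting a slight radial decrease in $\theta$ so that the maximizer is unique.

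Source dominance \eqref{eq:source_dominance_initial} reads $\lambda b_0^2\ge C_*\delta_c^2\lambda^2a_0^2$. Since the two amplitudes are independent, it is enforced simply by choosing $b_0\ge C\,\delta_c\,\rho^{1/2}a_0$. This condition does not interact with the other requirements because $\Gamma_0$ does not enter the Biot--Savart law.

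The main obstacle is tail and smooth-kernel domination, $\Tail_P+\operatorname{Smooth}_P\le\eta\mathcal H_P$, for a \emph{narrow} cone. The natural smooth datum $G_0\approx a_0y$ fills the whole square $\mathcal P_\lambda$, not just the diagonal. The part of $\sigma$ produced outside the aperture-$\delta_c$ cone is therefore larger than the cone mass by a factor of order $\delta_c^{-1}$. Moreover, $G_0$ cannot be made homogeneous and cone-supported near the origin without losing smoothness at $(1,0)$.

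My first approach is to read the tail as in \Cref{def:tail_masses}: the exterior tail consists only of the dyadic annuli outside $\mathcal P_\lambda$. Under that reading the off-diagonal part inside the square is counted in the local positive mass, which has the same sign by \Cref{cor:BS_sign}. The genuine exterior tail is then $O(\rho a_0)$ times the decay of $K_\Omega$ on the annuli $A_j$, where $G_0$ vanishes for $2^j\lambda\gtrsim2\rho$.

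Smallness relative to $\eta\mathcal H$ is obtained as follows. I take $\lambda=\rho$ and choose the support of $\theta$ inside $\mathcal P_{(1+\eta')\rho}$, so that only the single annulus $A_1$ meets the support. On that annulus the kernel is bounded while the support has measure $O(\eta'\rho^2)$. The periodic-image, axis, and global-boundary kernels are smooth, so they contribute $O(\rho^2a_0)$, which is $\le\eta\delta_c\rho a_0$ once $\rho$ is small.

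If the intended definition instead charges the off-cone mass inside the square as tail, the fix is to let $G_0$ take the form $a_0y$ only in a ball of radius $\rho_1\ll\delta_c\rho$. Outside that ball I concentrate $G_0$ angularly on the diagonal using a smooth angular profile. Smoothness is preserved because the anisotropic region stays away from the origin. The off-diagonal contribution is then $O(\rho_1a_0)\ll\eta\delta_c\rho a_0$.
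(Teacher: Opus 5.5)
Your route is correct in its second variant, but it differs from the paper's. The paper uses one smooth cutoff $\chi(x,z)$, supported in the side-wall collar and in the narrow diagonal cone (reflected to $y<0$). It sets $G_0=\alpha z\chi$ and $\Gamma_0=\beta z r^2\chi$. Tail smallness and conic dominance are then ``strict'' because the exterior and off-cone amplitudes vanish identically. Source dominance comes from $\beta\gg\alpha$, and small anisotropy from radial flatness.

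You instead decouple the two fields. $\Gamma_0=b_0\chi(r)s(z)$ is not localized at all, which is harmless because $\Gamma$ does not enter the meridional Biot--Savart recovery. $G_0$ equals $a_0y$ exactly on a full neighbourhood of the corner and is concentrated angularly only outside a ball of radius $\rho_1$.

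The difference matters. A continuous function supported in a cone with vertex at $(x,y)=(0,0)$ vanishes along $x=0$, $y>0$, and hence at the vertex. The paper's choice therefore forces $a(0)=\partial_zG_0(1,0)=\alpha\chi(0,0)=0$ and $b(0)=\partial_z\Gamma_0(1,0)=0$, so $B(0)=0$. That contradicts $B(0)^2\ge C_*\mathfrak M_{\delta_c}(0)^2>0$ and the requirement $B(0)>0$ of the criterion.

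Your ball-plus-angular-profile device keeps $a(0)=a_0$, $b(0)=b_0$ and $\partial_r\Gamma_0\equiv0$ in the core. The cost is an off-cone contribution $O(\rho_1a_0)$, which you absorb by taking $\rho_1\ll\eta\delta_c\rho$. The paper's route buys an exactly vanishing off-cone mass, but only by losing the core amplitudes. You also spell out the oddness of $\phi_0$ via Dirichlet uniqueness, which the paper only asserts.

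Some adjustments are needed.
\begin{enumerate}
\item The paper's intended notion of conic dominance is your second reading: support in the cone, with zero exterior amplitude. Downstream it uses $\sigma\lesssim\mathfrak M_{\delta_c}$ on the admissible class. Under your first construction $\sigma\sim\rho a_0\sim\delta_c^{-1}\mathfrak M_{\delta_c}(0)$, so make the second construction the main argument rather than a fallback.
\item In that construction, take the angular profile identically $1$ on the inner narrow cone, so that linear-core nondegeneracy holds with $\delta=0$. Also support it where the score cutoff $\chi_{\lambda,\delta_c}$ equals one, or give the cutoff's transition layer angular width $\ll\eta\delta_c$. Otherwise that layer is an off-cone term comparable to the cone mass.
\item Drop the uniqueness claim. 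For $G_0\ge0$ on the upper cone and cutoffs increasing in $\lambda$, the map $\lambda\mapsto\mathcal H_{\lambda,\delta_c}[G_0]$ is nondecreasing. It is constant once $\mathcal P_\lambda$ contains the support, so no radial modulation of $\theta$ makes the maximizer unique. The paper's ``strict'' means strict inequality in the dominance and tail bounds, which you already have.
\item With the paper's indexing, $A_1=\mathcal P_{4\lambda}\setminus\mathcal P_{2\lambda}$. So once the support lies in $\mathcal P_{2\lambda}$, the dyadic tail is exactly zero.
\item Your stated support $\{x+|y|\le2\rho\}$ for $\theta$ is not contained in $\mathcal P_{(1+\eta')\rho}$, so $\theta$ must be redefined. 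This is possible, since $\{x+|y|\le\rho\}\subset\mathcal P_{\rho}$.
\end{enumerate}
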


\begin{proof}
Let \(x=1-r\), \(y=z\), and choose a smooth cutoff \(\chi(x,y)\) supported in a sufficiently small side-wall collar and in a narrow diagonal cone for \(y>0\), with odd reflection across \(y=0\).  Set, near the side wall,
\[
\Gamma_0(r,z)=\beta z\,r^2\chi(x,z),
\qquad
G_0(r,z)=\alpha z\chi(x,z),
\]
with \(\alpha,\beta>0\), and extend smoothly by zero away from the collar.  The factor \(r^2\) ensures that \(u_0^\theta=\Gamma_0/r\) is regular at the axis.  The support of \(G_0\) is away from the axis, so the elliptic recovery
\[
-\Delta_5\phi_0=G_0,
\qquad
\phi_0|_{r=1}=0
\]
with periodicity in \(z\) and regularity at \(r=0\) gives a smooth meridional velocity
\[
u_0^r=-r\phi_{0,z},
\qquad
u_0^z=2\phi_0+r\phi_{0,r}.
\]
Then \(u_0^r(1,z)=0\), \(\nabla\cdot u_0=0\), and the exact odd parities are satisfied.  By choosing the support inside a single narrow diagonal cone and taking exterior amplitude zero, the conic dominance and tail smallness are strict.  By choosing \(\Gamma_0\) radially flat in the inner core, \(|\Gamma_r|\ll |\Gamma_z|\).  Finally, choosing \(\beta\) sufficiently large relative to \(\alpha\) gives the source dominance \(B(0)^2\ge C_*\mathfrak M_{\delta_c}(0)^2\).
\end{proof}

\medskip
\noindent\textbf{Continuation-norm blow-up.}
\begin{proposition}\label{prop:continuation_norm_blowup}
If \(B(t)=\lambda(t)^{1/2}\partial_z\Gamma(1,0,t)\to\infty\) while \(0<\lambda(t)\le\lambda_0\), then
\[
\|\nabla u(t)\|_{L^\infty}\to\infty.
\]
Consequently the classical smooth Euler solution cannot be continued past the comparison blow-up time.
\end{proposition}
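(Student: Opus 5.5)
The plan is to convert growth of the normalized amplitude $B$ into growth of an actual velocity-gradient component at the fixed boundary point $(r,z)=(1,0)$, with no use of the packet machinery. The first step removes the normalization. Since $0<\lambda(t)\le\lambda_0$, we have
\[
b(t)=\partial_z\Gamma(1,0,t)=\lambda(t)^{-1/2}B(t)\ge \lambda_0^{-1/2}B(t),
\]
so $B(t)\to\infty$ forces $b(t)\to\infty$. Note that $\lambda$ appears only as a factor bounded above; a shrinking scale helps rather than hurts. The only thing to check here is that the hypothesis $\lambda\le\lambda_0$ is what is being used, and not any lower bound on $\lambda$.

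The second step identifies $b$ with a Cartesian component of $\nabla u$. Since $\Gamma=ru^\theta$ and $\partial_z r=0$,
\[
\partial_z\Gamma = r\,\partial_z u^\theta = -r\,\omega^r .
\]
At $r=1$ this gives $b(t)=\partial_z u^\theta(1,0,t)=-\omega^r(1,0,t)$. Evaluate at the Cartesian point $(x_1,x_2,x_3)=(1,0,0)$, which is the $\theta=0$ representative of the boundary point. There $e_\theta=e_2$, and $e_\theta$ does not depend on $z$, so $\partial_{x_3}u_2=\partial_z u^\theta$. Hence
\[
\|\nabla u(t)\|_{L^\infty}\ge|\partial_{x_3}u_2(1,0,0,t)|=|b(t)|\ge\lambda_0^{-1/2}B(t)\to\infty .
\]
Equivalently, $\|\omega(t)\|_{L^\infty}\ge|\omega^r(1,0,t)|=|b(t)|$. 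This lets us invoke the Beale--Kato--Majda criterion \cite{BKM1984} in its vorticity form as well.

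As a consistency check, and as an alternative route, I would also record the integrated form of \eqref{eq:b_exact_final}:
\[
b(t)=b(0)\exp\!\Big(\int_0^t\sigma\Big).
\]
Using $|\sigma|\le\|\nabla u\|_{L^\infty}$, this shows $b\to\infty$ forces $\int_0^t\|\nabla u\|_{L^\infty}\to\infty$. That is exactly the continuation criterion, so the two routes agree.

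The continuation statement then follows by contradiction. Suppose a classical smooth solution existed on $[0,T^*]$, with $T^*$ the comparison blow-up time. By compactness of $[0,T^*]$ and smoothness up to the boundary, $\nabla u$ would be bounded on $\overline\Omega\times[0,T^*]$. This contradicts the lower bound obtained above as $t\uparrow T^*$.

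The only genuine obstacle is logical rather than analytic. The proposition uses the hypothesis that $B$ actually diverges along the Euler trajectory. That divergence is supplied by \Cref{thm:narrow_diag_closure} only as long as the bootstrap hypotheses persist up to $T^*$. I would therefore state the result conditionally, exactly as in the remark following \Cref{thm:fixed_center_exact_odd_criterion}. I would also make sure the proof does not tacitly assume that persistence.
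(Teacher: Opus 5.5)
Your proposal is correct and follows the paper's proof. You use $\lambda\le\lambda_0$ to get $|\partial_z\Gamma(1,0,t)|\ge\lambda_0^{-1/2}B(t)$, then identify $\partial_z\Gamma(1,0,t)$ with $\partial_z u^\theta(1,0,t)$, a component of $\nabla u$ at the boundary point; your Cartesian identification with $\partial_{x_3}u_2(1,0,0)$ only makes the paper's last step explicit. The integrated check via $b(t)=b(0)\exp(\int_0^t\sigma)$ and the BKM remark are unnecessary, since pointwise divergence of $\|\nabla u(t)\|_{L^\infty}$ already contradicts smoothness on the closed interval $[0,T^*]$.
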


\begin{proof}
Since \(\lambda(t)\le\lambda_0\),
\[
|\partial_z\Gamma(1,0,t)|
=\lambda(t)^{-1/2}B(t)
\ge
\lambda_0^{-1/2}B(t).
\]
At \(r=1\), \(\Gamma=r u^\theta\), so \(\partial_z\Gamma(1,0,t)=\partial_z u^\theta(1,0,t)\).  Hence
\[
\|\nabla u(t)\|_{L^\infty}
\ge
|\partial_z u^\theta(1,0,t)|
\to\infty.
\]
This contradicts smooth continuation.
\end{proof}

\subsection{Conditional exact-odd narrow diagonal criterion}

\medskip
\noindent\textbf{Conditional exact-odd narrow diagonal criterion.}
\begin{theorem}\label{thm:final_narrow_diag_blowup_criterion}
Let $u$ be a smooth exact-odd axisymmetric Euler solution with swirl.  Assume that on a common interval the narrow-diagonal conic admissibility hypotheses, side-wall Dirichlet parametrix bounds, tail dominance, source-core lower bound, anisotropy control, and admissibility-error absorption established in the preceding sections all persist with their stated constants.  Then
\[
D^+\mathfrak M_{\delta_c}(t)\ge cB(t)^2,
\qquad
B'(t)\ge c\mathfrak M_{\delta_c}(t)B(t),
\]
and the comparison amplitudes blow up in finite time whenever their initial values are positive.  Therefore the Euler solution cannot remain smooth and simultaneously satisfy the full admissible packet bootstrap until that comparison time.
\end{theorem}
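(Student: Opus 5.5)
The argument will assemble results already proved in the preceding sections; the theorem adds no new analytic input. The overall structure is: establish the two differential inequalities on the common bootstrap interval, run the Dini comparison, and then convert the comparison blow-up into a contradiction with continued smoothness.

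\emph{Step 1 (the inequality for $B$).} I will take $\lambda(t)$ to be the scale of a near-maximizing narrow diagonal packet and evolve it by $\lambda'=-\sigma\lambda$. Exact odd symmetry fixes the center $(1,0)$, so \Cref{lem:exact_odd_point_identities} gives $b'=\sigma b$, and therefore $B'/B=\tfrac12\sigma$. I then need the lower bound $\sigma\ge c\mathfrak M_{\delta_c}$. For this I will invoke \Cref{cor:BS_sign} with the kernel expansion of \Cref{thm:green_kernel_expansion}. The conic mass $\mathcal H_{\lambda,\delta_c}$ is dominated by the full half-packet mass $\mathcal H_\lambda$, because $G\ge0$ on $y>0$ and $K_0>0$. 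The remainder terms are absorbed under the assumed tail and smooth-kernel domination and the smallness $\lambda\le\lambda_0$. This yields $\sigma\ge c(1-\eta-C\lambda_0)\mathfrak M_{\delta_c}$, and hence $B'\ge c\mathfrak M_{\delta_c}B$.

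\emph{Step 2 (the Dini inequality for $\mathfrak M_{\delta_c}$).} I will follow the proof of \Cref{thm:narrow_diag_closure}. Fix $t$ and pick a near-maximizer. Since $\mathfrak M_{\delta_c}(t+h)$ is a supremum, it is bounded below by the mass of the transported packet, so $D^+\mathfrak M_{\delta_c}(t)$ is bounded below by the derivative of that transported mass. Differentiate using $D_tG=r^{-4}\partial_z(\Gamma^2)$. The principal source term is bounded below by $cB^2$ via \Cref{lem:diag_source_lower}. The transport terms split into two parts:
\begin{itemize}
\item the hyperbolic part, which is small by \Cref{lem:diag_kernel_transport};
\item the strain defect, which is controlled by \Cref{thm:shear_suppression} together with \Cref{lem:anisotropy}.
\end{itemize}
Together with the kernel, tail and cone errors, all of these are absorbed by \Cref{thm:narrow_admissibility_absorption}, which uses the source-dominance bound \eqref{eq:source_dominance_initial} assumed to persist. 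This gives $D^+\mathfrak M_{\delta_c}\ge c_1B^2$.

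\emph{Step 3 (comparison and conclusion).} I will compare with the system $X'=c_1Y^2$, $Y'=c_2XY$. Eliminating time gives $Y^2=(c_2/c_1)X^2+C$, so $X'\ge cX^2$ and the comparison solution blows up at a finite time $T_*$. A Dini comparison lemma, valid for locally Lipschitz $\mathfrak M_{\delta_c}$ thanks to the smooth cutoffs, transfers this lower bound to the pair $(\mathfrak M_{\delta_c},B)$. The bootstrap interval therefore cannot extend to $T_*$ with the solution smooth. Indeed, $B\to\infty$ with $\lambda\le\lambda_0$ forces $\|\nabla u\|_{L^\infty}\to\infty$ by \Cref{prop:continuation_norm_blowup}. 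So either a continuation norm blows up or the admissible bootstrap fails, which is the stated dichotomy.

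\emph{Main obstacle.} The delicate point is Step 2. The Dini lower bound must hold uniformly even though the near-maximizer may change with $t$. Moreover, the error absorption relies on $\sigma\lesssim\mathfrak M_{\delta_c}$, the reverse of the bound used in Step 1, together with source dominance. I would first try to justify the upper bound on $\sigma$ from the packet--core coupling of \Cref{thm:packet_core} and tail domination. I would then make explicit that all constants $\delta_c,\varepsilon,\eta,\lambda_0$ are fixed before the interval is chosen, so that the absorbed error is at most $\tfrac12 c_1B^2$ at every time.
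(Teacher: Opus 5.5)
Your proposal is correct relative to the paper and follows its route. The paper's proof cites \Cref{thm:narrow_diag_closure} for the Dini system, the comparison system $X'=c_1Y^2$, $Y'=c_2XY$ for finite-time blow-up, and \Cref{prop:continuation_norm_blowup} for the continuation-norm conclusion; your Steps 1--3 inline exactly these ingredients. The only deviation is cosmetic and does not change the argument: the paper controls the strain defect by the strain-variation bound \eqref{eq:parametrix_strain_package}, not by \Cref{lem:anisotropy}, which governs the separate anisotropy error, and in any case admissibility-error absorption is itself one of the persisting hypotheses of this theorem.
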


\begin{proof}
The Dini system is \Cref{thm:narrow_diag_closure}.  Finite-time blow-up follows from the ODE comparison.  The implication to a standard continuation norm is \Cref{prop:continuation_norm_blowup}.  The initial-data proposition supplies positive admissible initial values only; persistence of the full bootstrap remains an independent hypothesis.
\end{proof}

\begin{remark}
The side-wall Green-kernel input used in \Cref{thm:final_narrow_diag_blowup_criterion} is recorded as the Dirichlet parametrix package in \Cref{thm:sidewall_parametrix_package}.  Its proof is included before the bibliography so that the compression sign, shear cancellation, and strain-variation estimates are part of the theorem package rather than external assumptions.
\end{remark}

\appendix
\section{Side-wall Dirichlet parametrix package}\label{app:sidewall_parametrix}

This appendix supplies the local kernel estimates used in the narrow diagonal packet closure.  The result is a boundary parametrix statement for the five-dimensional lifted recovery equation in the periodic cylinder.  It is local near the side wall and separates the singular half-space contribution from smooth global corrections.

\subsection{Lifted side-wall coordinates}

Let \(Y\in\mathbb R^4\), \(r=|Y|\), and let
\[
\Omega_5=\{(Y,z): |Y|<1,\ z\in\mathbb T\}.
\]
Near the boundary point \((Y,z)=(e_1,0)\), write
\[
x=1-|Y|,
\qquad
 y=z,
\qquad
\eta\in\mathbb R^3,
\]
where \(\eta\) are local coordinates along \(S^3\).  The side wall is \(x=0\), and the fluid lies in \(x>0\).  The lifted stream potential satisfies
\[
-\Delta_5\phi=G,
\qquad
\phi|_{|Y|=1}=0.
\]
The Dirichlet condition follows from impermeability: \(u^r(1,z)=0\) gives \(\partial_z\phi(1,z)=0\), hence \(\phi(1,z)\) is constant in \(z\), and we normalize the constant to zero.

The compression rate is
\[
\sigma=-\partial_z u^z(1,0)=-\partial_z\partial_r\phi(1,0).
\]
Since \(\partial_r=-\partial_x\), the compression kernel is the mixed boundary derivative \(\partial_x\partial_y\) of the Dirichlet Green function at the boundary point.

\subsection{Half-space leading kernel}

The full-space fundamental solution of \(-\Delta_{\mathbb R^5}\) is \(\Phi_5(W)=c_5|W|^{-3}\).  In the half-space \(\mathbb R^5_+=\{x>0\}\), the Dirichlet Green function is
\[
G_D(X,\Xi)=c_5\bigl(|X-\Xi|^{-3}-|X-\Xi^*|^{-3}\bigr),
\]
where \(\Xi=(\xi,\zeta,\eta)\) and \(\Xi^*=(-\xi,\zeta,\eta)\).  Direct differentiation gives
\begin{equation}\label{eq:5D_mixed_kernel_app}
\partial_x\partial_yG_D(0;\xi,\zeta,\eta)
=
C_5\frac{\xi\zeta}{(\xi^2+\zeta^2+|\eta|^2)^{7/2}},
\qquad C_5>0.
\end{equation}
Because the source is \(SO(4)\)-invariant in the lifted variables, the effective two-variable kernel is obtained by integrating in \(\eta\in\mathbb R^3\):
\[
\int_{\mathbb R^3}
(\xi^2+\zeta^2+|\eta|^2)^{-7/2}\,d\eta
=C(\xi^2+\zeta^2)^{-2}.
\]
Hence
\begin{equation}\label{eq:effective_kernel_app}
K_0(\xi,\zeta)
=C_0\frac{\xi\zeta}{(\xi^2+\zeta^2)^2},
\qquad C_0>0.
\end{equation}
This kernel is positive for \(\xi>0,\zeta>0\) and odd in \(\zeta\).

\subsection{Remainder estimate}

In the flattened side-wall coordinates, the lifted operator is a smooth perturbation of the half-space Laplacian:
\[
\Delta_5
=
\partial_x^2+\partial_y^2+\Delta_\eta
+\text{first-order curvature terms}
+\text{coefficients vanishing at }x=0.
\]
Equivalently,
\[
L=L_0+L_1,
\qquad
L_0=-\Delta_{x,y,\eta},
\]
where \(L_1\) is a smooth first-order perturbation plus second-order coefficient errors of size \(O(|(x,y,\eta)|)\).  A one-step boundary parametrix gives
\[
G_{\Omega_5}=G_D+R,
\]
where the mixed boundary derivative of \(R\) is one local order smoother than the leading mixed derivative.  Thus, for \(\Xi=(\xi,
\zeta,\eta)\) in a side-wall packet of scale \(\lambda\),
\begin{equation}\label{eq:R_5D_bound_app}
|\partial_x\partial_yR(0;\Xi)|
\le
C\lambda
\frac{|\xi\zeta|}{(\xi^2+\zeta^2+|\eta|^2)^{7/2}}
+K^{5D}_{\rm smooth}(\Xi).
\end{equation}
The smooth term collects contributions from periodic images in \(z\), the axis \(r=0\), and global boundary corrections away from the side-wall packet.  Integrating \eqref{eq:R_5D_bound_app} in \(\eta\) yields the effective two-variable remainder bound
\begin{equation}\label{eq:R_effective_bound_app}
|K_{\rm rem}(x,y)|
\le
C\lambda\frac{|xy|}{(x^2+y^2)^2}+K_{\rm smooth}(x,y).
\end{equation}

\medskip
\noindent\textbf{Side-wall Dirichlet parametrix package.}
\begin{theorem}\label{thm:sidewall_parametrix_package}
Let \(\phi\) solve
\[
-\Delta_5\phi=G,
\qquad
\phi|_{|Y|=1}=0
\]
in the lifted periodic cylinder.  Let \(x=1-r\), \(y=z\), and let \(0<\lambda\le\lambda_0\) be inside the side-wall coordinate scale.  Then the compression kernel satisfies
\begin{equation}\label{eq:parametrix_compression_package}
K_\Omega(1-x,y)
=
C_0\frac{xy}{(x^2+y^2)^2}+K_{\rm rem}(x,y),
\qquad C_0>0,
\end{equation}
with
\begin{equation}\label{eq:parametrix_remainder_package}
|K_{\rm rem}(x,y)|
\le
C\lambda\frac{|xy|}{(x^2+y^2)^2}+K_{\rm smooth}(x,y).
\end{equation}
On the narrow diagonal cone \(|y/x-1|\le\delta_c\), the off-diagonal shear and strain variation obey
\begin{equation}\label{eq:parametrix_shear_package}
|\partial_r u^z(1-x,y)|
\le
C(\delta_c+\lambda)\sigma
+\Tail_{\rm shear}+\operatorname{Smooth}_{\rm shear},
\end{equation}
where the text below uses the conventional notation \(C(\delta_c+\lambda)\) for the first term, and
\begin{equation}\label{eq:parametrix_strain_package}
|\nabla u(1-x,y)-\nabla u(1,0)|
\le
C(\delta_c+\lambda)\sigma
+\Tail_{\rm strain}+\operatorname{Smooth}_{\rm strain}.
\end{equation}
Consequently, after choosing \(\lambda_0\), \(\delta_c\), and the tail/smooth admissibility thresholds sufficiently small, the compression sign, shear suppression, and strain-variation bounds used in the narrow diagonal bootstrap hold with positive constants depending only on the fixed cylinder geometry.
\end{theorem}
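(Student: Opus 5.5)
The plan is to work entirely in the lifted five-dimensional picture and derive the four displayed estimates from one boundary parametrix, then integrate out the transverse variables. First, straighten the side wall near $(Y,z)=(e_1,0)$ using the coordinates $(x,y,\eta)$ with $\eta\in\mathbb R^3$ on $S^3$. In these coordinates $\Delta_5=L_0+L_1$, where $L_0=\Delta_{x,y,\eta}$. The perturbation $L_1$ has first-order coefficients that are bounded, for example $-3(1-x)^{-1}\partial_x$ together with the metric terms on $S^3$, and second-order coefficient errors of size $O(x+|\eta|)$.

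Next, localize with a cutoff $\psi$ of scale $\lambda_0$ and write
\[
G_{\Omega_5}=\psi G_D+R,
\]
where $G_D$ is the reflected half-space Green function. The correction $R$ then solves a Dirichlet problem with source $-[L_0+L_1,\psi]G_D-\psi L_1G_D$. The commutator part is supported away from the singular point, so it contributes to $K^{5D}_{\rm smooth}$. The same is true of the periodic images in $z$ and of the axis. The term $\psi L_1G_D$ is one order less singular than $L_0G_D$. Applying the half-space Green operator to it and using standard Schauder and Calderón--Zygmund boundary estimates, or an explicit kernel convolution estimate, should give a mixed boundary derivative $\partial_x\partial_yR(0;\Xi)$ that gains a factor $|\Xi|\lesssim\lambda$ over the leading kernel \eqref{eq:5D_mixed_kernel_app}. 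This is exactly \eqref{eq:R_5D_bound_app}.

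Integrating in $\eta$, using $\int_{\mathbb R^3}(\rho^2+|\eta|^2)^{-7/2}d\eta=C\rho^{-4}$, gives \eqref{eq:parametrix_compression_package}--\eqref{eq:parametrix_remainder_package}.

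For the shear and strain bounds, I would represent $\partial_ru^z$ and $\nabla u$ at the interior point $(1-x,y)$ using the same parametrix, with kernels $\nabla^2_X G_D(X;\Xi)$ and their lower-order companions from $u^z=2\phi+r\partial_r\phi$. The strain-variation estimate \eqref{eq:parametrix_strain_package} would come from splitting the source region into two parts.

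On the far region $|\Xi|\ge 2|X|$, write $\nabla u(X)-\nabla u(0)$ as $|X|\sup|\nabla^3 G_D|$. This gives a factor $|X|/|\Xi|$ times the leading kernel. Summing dyadically, and using the cone nondegeneracy $G\sim a y$ together with packet--core coupling \Cref{thm:packet_core}, bounds this piece by $C(|X|/\lambda)\mathcal H_\lambda$. Together with $\mathcal H_\lambda\lesssim\sigma$ from \Cref{cor:BS_sign}, this becomes $O(\cdot)\sigma$. The near region $|\Xi|\lesssim|X|$ is handled by the local linear profile of $G$.

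For $\partial_ru^z$ specifically, I would use the parity argument of \Cref{thm:shear_suppression}. Its kernel at the centre is even in $\zeta$, so it annihilates odd $G$, and the variation estimate above then controls it off centre. Contributions from outside the packet go into $\Tail_{\rm shear}$ and $\Tail_{\rm strain}$, and those from images and the axis go into the smooth terms.

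The main obstacle is the factor $\delta_c$ rather than $\kappa$ on the whole narrow diagonal cone, where $|X|$ need not be small relative to $\lambda$. There the Taylor gain alone gives only $x/\lambda$. I would try to exploit that on the diagonal the $x$- and $y$-contributions of the leading half-space shear kernel balance, exactly as in \Cref{lem:diag_kernel_transport}: the combination vanishes on $y=x$ up to $O((x^2-y^2)/(x^2+y^2))=O(\delta_c)$. I expect this to require the linear-core profile assumption, and this is the step I am least sure closes without it. The final sentence of the theorem then follows by choosing $\lambda_0$, $\delta_c$, and $\eta$ small, so that these errors are absorbed into $c_0\mathcal H_\lambda$ in \eqref{eq:BS_sign}.
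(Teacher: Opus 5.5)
Your compression-kernel argument follows the same route as the paper's proof: flatten the wall, use a one-step parametrix $G_{\Omega_5}=\psi G_D+R$ with a one-order gain, then integrate out $\eta$. It is fine at the level of detail the paper itself gives.

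The genuine gap is the step you flagged, and it cannot be closed. With the factor $\delta_c$, \eqref{eq:parametrix_shear_package}--\eqref{eq:parametrix_strain_package} cannot hold on the full narrow diagonal cone. The obstruction is pointwise, not a matter of kernel cancellation. Since $\omega^\theta=\partial_zu^r-\partial_ru^z$,
\[
\partial_ru^z=\partial_zu^r-rG,\qquad \partial_zu^r=-r\partial_z^2\phi,
\]
and $\partial_z^2\phi=0$ on the wall.

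Take the linear-core model $G=ay\,\chi(X/\lambda)$, so $G\approx ay$ on a neighborhood of the corner of size $\lambda$ and $\sigma\approx c\,a\lambda$. Oddness in $y$ and $\phi|_{x=0}=0$ leave only $xy$ and $x^2y$ among monomials of degree at most three. Hence $\phi=\sigma xy+\beta x^2y+O(|X|^4)$, which gives $\partial_zu^r=O(a|X|^2/\lambda)$ and
\[
\partial_ru^z(1-s,s)=-as\bigl(1+O(s/\lambda)\bigr),
\]
which is of size $(s/\lambda)\sigma$. Tail and smooth terms are negligible for such a localized $G$. So for $\delta_c+\lambda\ll s/\lambda\ll1$ the shear exceeds $C(\delta_c+\lambda)\sigma$, however narrow the cone. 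Because $\partial_ru^z(1,0)=0$ by oddness, the same lower bound holds for $|\nabla u(1-x,y)-\nabla u(1,0)|$.

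Your near-region term, ``handled by the local linear profile of $G$'', is exactly this contribution. So the linear-core assumption produces the obstruction rather than removing it. The analogy with \Cref{lem:diag_kernel_transport} does not help either. That identity concerns $V_\sigma$ acting on the compression kernel in the source variables and says nothing about how shear kernels vary with the observation point. Moreover, the wall breaks the $x\leftrightarrow y$ symmetry that would be needed to force a cancellation on $y=x$.

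Do not expect the paper's proof to supply the missing idea. It asserts that the Taylor factor $C(x+|y|)/\lambda$ is at most $C\delta_c$ ``in the inner narrow diagonal packet'', and that displacements in the cone are $O(\delta_c\lambda)$. Both claims conflate the angular width $\delta_cx$ of the cone with the distance $\sim x$ from the corner.

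What your argument, like the paper's, actually yields is the $\kappa$-version of \Cref{thm:shear_suppression}: a bound $C(|X|/\lambda+\lambda)\sigma$ plus tail and smooth terms, i.e.\ $C(\kappa+\lambda)\sigma$ on $x+|y|\le\kappa\lambda$. Your far-region dyadic sum, taken in absolute value, even costs an extra factor $\log(\lambda/|X|)$. Recovering $\delta_c$ would require shrinking the radius to $\delta_c\lambda$. That is incompatible with using these bounds on the whole support of $\chi_{\lambda,\delta_c}$ in \Cref{thm:narrow_admissibility_absorption}, so the closing ``consequently'' clause is unproved for the shear and strain parts.

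Finally, no bound of shear by $\sigma$ can hold without the odd sign class, since for $G$ even in $y$ one has $\sigma=0$. The statement omits this hypothesis; you were right to import it together with packet--core coupling.
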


\begin{proof}
The expansion \eqref{eq:parametrix_compression_package} is the sum of the half-space contribution \eqref{eq:effective_kernel_app} and the remainder estimate \eqref{eq:R_effective_bound_app}.  On the positive diagonal cone, \(xy/(x^2+y^2)^2\sim x^{-2}\), so the term \(C\lambda K_0\) is absorbed into the leading positive kernel by taking \(\lambda_0\) small.

For shear, the leading shear kernel at the boundary center is even in \(y\).  Since the active class has \(G\) odd in \(y\), this leading term cancels.  Moving from \((x,y)=(0,0)\) to a point in the narrow diagonal cone introduces a Taylor factor bounded by \(C(x+|y|)/\lambda\), and in the inner narrow diagonal packet this is bounded by \(C\delta_c\).  The parametrix remainder contributes \(C\lambda\sigma\), and global smooth contributions are precisely the shear tail and smooth terms.  This proves \eqref{eq:parametrix_shear_package}.

For strain variation, differentiate the same kernel representation once more in the observation point.  The derivative of the effective two-variable kernel is one power more singular, but the displacement inside the narrow diagonal core is at most \(O(\delta_c\lambda)\).  Relative to the leading strain size \(\sigma\), the local variation is therefore \(O(\delta_c)\sigma\), while the operator perturbation contributes \(O(\lambda)\sigma\) and the global terms are assigned to \(\Tail_{\rm strain}\) and \(\operatorname{Smooth}_{\rm strain}\).  This proves \eqref{eq:parametrix_strain_package}.
\end{proof}

\section{Limitations and open problem}
The Green-kernel, parity, packet, and Dini estimates in this paper provide a conditional amplification mechanism.  They do not prove that the complete packet-dominance, tail, shear, anisotropy, and localization hypotheses remain valid on one Euler trajectory until the comparison blow-up time.  The persistence problem is therefore logically prior to any unconditional boundary-singularity conclusion for this mechanism.  No result from a distinct whole-space construction is used to supply this missing persistence theorem.

\end{document}